\newtheorem{prop}{Proposition}
\newtheorem{cor}[prop]{Corollary}
\newtheorem{theorem}[prop]{Theorem}
\newtheorem{lemma}[prop]{Lemma}
\newtheorem{conj}[prop]{Conjecture}
\begin{document}

\title{Conway's subprime Fibonacci sequences}
\author{Richard K.~Guy, Tanya Khovanova, Julian Salazar}
\maketitle

\centerline{\textit{In memory of Martin Gardner}}

\begin{abstract}
It's the age-old recurrence with a twist: sum the last two terms and \emph{if the result is composite, divide by its smallest prime divisor} to get the next term (e.g., 0, 1, 1, 2, 3, 5, 4, 3, 7, \ldots). These sequences exhibit pseudo-random behaviour and generally terminate in a handful of cycles, properties reminiscent of $3x + 1$ and related sequences. We examine the elementary properties of these `subprime' Fibonacci sequences.
\end{abstract}

\section{Introduction}
\label{sec:intro}

When John Conway last visited the first author, he passed the time on the plane by calculating what we now call \textbf{subprime Fibonacci sequences}. They are just the sort of thing Martin Gardner would have featured in his column. There is some risk of their becoming as notorious as the $3x+1$ (Collatz) problem \cite{La10}, with which they seem to have something in common, and of which Erd\H{o}s has said, ``Mathematics is not yet ripe for such problems.''

The $3x+1$ sequences take a positive integer and iteratively apply the following rule: if a number is odd, triple it and add one; if even, halve it:
\[
t_{n+1} = 
  \begin{cases}
   3t_n+1 & \text{if } t_n \text{ odd} \\
   \frac{t_n}{2} & \text{if } t_n \text{ even.}
  \end{cases}
\]
The sequences produced by this rule always appear to reach an infinite string of $4$, $2$, $1$, $4$, $2$, $1$, etc., and the problem is whether all sequences reach this \textbf{cycle}, i.e., whether for all $t_0$, there is some $n$ where $t_n = 1$. Here are some examples:
\begin{align*}
& 6, 3, 10, 5, 16, 8, \textbf{4, 2, 1}, 4, \ldots \\
& 17, 52, 26, 13, 40, 20, 10, 5, 16, 8, \textbf{4, 2, 1}, 4, \ldots \\
& 30, 15, 46, 23, 70, 35, 106, 53, 160, 80, 40, 20, 10, 5, 16, 8, \textbf{4, 2, 1}, 4, \ldots.
\end{align*}

Despite the simple rule, the paths of the sequences are rather unpredictable. Starting with 33 takes 26 steps and climbs to 100 before reaching 1, while 27 takes 111 steps and climbs to over 9000 before reaching 1. Such behavior has made this and other similar problems seem intractable \cite{Gu83}; we cannot even show that such sequences could not go to infinity. As Lagarias introduces the problem in his $3x+1$ compendium \cite{La10}, he states that it touches number theory, ergodic theory, stochastic processes, and more, while not lying squarely in any of their domains.

A more recreational example is given by Conway's RATS sequences, one of many base-dependent `reversal' sequences \cite{Gu89}. RATS stands for Reverse, Add, Then Sort: take a number with digits in increasing order, reverse it, add to the original number, and then sort the result's digits in increasing order. Here are some base-10 examples.
\begin{align*}
& 12334444, 55667777, 123334444, 556667777, 1233334444, 5566667777, 12333334444, \ldots \\
& 123, 444, 888, 1677, 3489, \textbf{12333, 44556, 111, 222, 444, 888, 1677, 3489}, 12333, \ldots
\end{align*}
The first sequence is known as \textbf{the creeper} (\href{http://oeis.org/A164338}{A164338} in OEIS\cite{OEIS}). It provably diverges in this regular pattern, and is reached by various starting terms such as $1$. Conway's conjecture is that all base-10 RATS sequences enter cycles (as in the second sequence) or enter the creeper and diverge.

One natural approach in tackling these types of problem involves restricting possible end behaviors of such sequences; their destinies, so to speak \cite{Kh09}. These two classes of sequence have rather different fates. For example, e Silva has verified that $3x+1$ sequences reach $1$ for starting numbers less than $5.76 \times 10^{18}$ \cite{La10}, and Simons and de Weger proved that if there were another $3x+1$ cycle it would have at least $69$ terms \cite{Si05}. By contrast, Cooper and Kennedy have shown the existence of base-10 RATS cycles for every length 2 and greater \cite{Co99}. Regardless, there are limits on potential analysis: Kurtz and Simon, building on earlier work by Conway, proved that a natural generalization of the $3x+1$ problem is undecidable \cite{Kur07}.

It is easy to discount these results as too problem-specific, and that such sequences could never lead to `useful' mathematics. Yet the appeal of such problems (the $3x+1$ problem was once called by S.\  Kakutani ``a conspiracy to slow down mathematical research in the U.S.''\cite[p.32]{La10}) has always lain in the contrast between how easy they are to play with and how hard it is to answer their questions. We hope the subprime Fibonacci sequences continue this tradition.

\section{Subprime Fibonacci sequences}

Start with the Fibonacci sequence 0, 1, 1, 2, 3, 5, \ldots, but before you write down a composite term, divide it by its least prime factor so that this next term is not 8, but rather $8/2 = 4$. After that the sum gives us $5+4=9$, but we write $9/3 = 3$, then $4+3=7$ which is okay since it is prime, then $3+7=10$ but we write $10/2=5$, and so on:

\begin{center}
\begin{tabular}{c@{\hspace{2mm}}c@{\hspace{2mm}}c@{\hspace{2mm}}c
@{\hspace{2mm}}c@{\hspace{2mm}}c@{\hspace{2mm}}c
@{\hspace{2mm}}c@{\hspace{2mm}}c@{\hspace{2mm}}c
@{\hspace{2mm}}c@{\hspace{2mm}}c@{\hspace{2mm}}c
@{\hspace{2mm}}c@{\hspace{2mm}}c@{\hspace{2mm}}c
@{\hspace{2mm}}c@{\hspace{2mm}}c}

0 & 1 & 1 & 2 &  3 & 5 & 4 & 3 & 7 & 5 & 6 & 11 & 17 & 14 & 31 & 15 & 23 & 19 \\
21 & 20 & 41 & 61 & 51 & 56 & 107 & 163 & 135 & 149 & 142 & 97 & 239 & 168 & 37 & 41 & 39 & 40 \\
79 & 17 & 48 & \textbf{13} & \textbf{61} & \textbf{37} & \textbf{49} & \textbf{43} & \textbf{46} & \textbf{89} & \textbf{45} & \textbf{67} & \textbf{56} & \textbf{41} & \textbf{97} & \textbf{69} & \textbf{83} & \textbf{76} \\
\textbf{53} & \textbf{43} & \textbf{48} & 13 & 61 & 37  & \ldots

\end{tabular}
\end{center}   
and we are in an 18-cycle. If we start with $1,1$ or $1,2$ it follows that we get the same result.  But we may start with any pair of numbers, and you may like to try starting with $2,1$, or $1,3$, or $3,9$, or $13,11$, etc.

One might suspect that every such sequence enters this 18-cycle, similar to the $3x+1$ problem's conjecture. After all, since our sequences are bounded or unbounded they must either enter a cycle or increase indefinitely. We do not believe the latter happens and provide a heuristic argument in Section \ref{sec:endconds}. But is the 18-cycle the only `non-trivial' cycle? Wait and see.

First, note that $a,a$, where $a \ne \pm1$ gives the sequence $a,a,a,a,\dotsc$. This is a \textbf{trivial cycle}. Sequences that end in trivial cycles are \textbf{trivial sequences}, e.g., 5, 15, 10, 5, 5, 5, \ldots, or $-143$, 39, $-52$, $-13$, $-13$, $-13$, \ldots. If two consecutive terms have the same sign then so do all subsequent terms. If they have opposite sign or include a zero, they bound further terms until two consecutive terms of the same sign appear, e.g., $-17$, 7, $-5$, 2, $-3$, $-1$, $-2$, \ldots, after which the sign remains constant.

Next, two terms of opposite parity are followed by an odd term, and two odd terms are followed by an even or an odd term depending on whether their sum is a multiple of 4. One can have arbitrarily long strings of even terms, but they must terminate since the power of 2 in consecutive terms must eventually decrease, e.g., 128, 160, 144, 152, 148, 150, 149, \ldots, and once we have an odd term (unless this sequence is trivial), subsequent even terms are isolated with each followed by at least two odd terms. Therefore, \emph{we are only concerned with sequences of positive terms, comprised of `runs' of odd terms separated by even terms}.

Finally, let the \textbf{shape} of a sequence be the string of its terms' parities ($O$ for odd, $E$ for even). The Fibonacci sequence has shape $EOOEOOEOOEOO\dotso$. Our first subprime Fibonacci sequence had shape $EOOEOOEOOOEOO\dotso$. The `extra' odd term here came from where the sum of the previous two odd terms only had one factor of 2. The example, starting at 13, 61 inclusive, gives the shape $OOOOOEOOOEOOOOEOOE$ that repeats with the 18-cycle.

\section{Nodes and other cycles}
\label{sec:nodes}

To help develop the terminology and flavor of these sequences, we plot their trajectories on a directed graph. This visual approach is often used in expositions of the $3x+1$ problem \cite[p.62]{La10}, with sequences as \textbf{paths} in an infinite digraph (Figure~\ref{fig:collatz}), determined by their starting points. The problem is whether this digraph is weakly connected (connected when viewed as an undirected graph).

\begin{figure}[H]
\centering

\begin{tikzpicture}[->,>=stealth',shorten >=1pt,auto,node distance=1.6cm,scale=0.8,transform shape,
  thick]

\node (c1) {4};
\node (c2) [above right of=c1] {2};
\node (c3) [below right of=c1] {1};
\node (1) [left of=c1] {8};
\node (2) [left of=1] {16};
\node (a) [left of=2] {32};
\node (b) [below left of=2] {5};
\node (a1) [left of=a] {64};
\node (aa) [above left of=a1] {21};
\node (ab) [left of=a1] {128};
\node (b1) [left of=b] {10};
\node (ba) [left of=b1] {20};
\node (bb) [below left of=b1] {3};
\node (aa1) [left of=aa] {42};
\node (ab1) [left of=ab] {256};
\node (ba1) [left of=ba] {40};
\node (bb1) [left of=bb] {6};
\node (aa2) [above left of=aa1] {...};
\node (aba) [above left of=ab1] {...};
\node (abb) [left of=ab1] {...};
\node (baa) [left of=ba1] {...};
\node (bab) [below left of=ba1] {...};
\node (bb2) [below left of=bb1] {...};

\path[every node/.style={font=\sffamily\small}]
(c1) edge [bend left] (c2)
(c2) edge [bend left] (c3)
(c3) edge [bend left] (c1)
(1) edge (c1)
(2) edge (1)
(a) edge (2)
(b) edge (2)
(a1) edge (a)
(aa) edge (a1)
(ab) edge (a1)
(b1) edge (b)
(ba) edge (b1)
(bb) edge (b1)
(aa1) edge (aa)
(ab1) edge (ab)
(ba1) edge (ba)
(bb1) edge (bb)
(aa2) edge (aa1)
(aba) edge (ab1)
(abb) edge (ab1)
(baa) edge (ba1)
(bab) edge (ba1)
(bb2) edge (bb1);

\end{tikzpicture}

\caption{Digraph generated by the $3x+1$ sequences}
\label{fig:collatz}

\end{figure}
 
   However, our sequences cannot immediately be represented in this fashion because of the second-order nature of our recurrence. We must carefully define vertices for our sequences, and so we introduce two important terms:
\begin{itemize}
\item The \textbf{nodes} of a sequence are ordered pairs of positive, odd coprimes which either begin the sequence or immediately follow the even terms of a sequence.
\item \textbf{Runs} are the strings beginning with a node and consisting of odd terms together with a single terminating even term.
\end{itemize}
In Section \ref{sec:endconds} we will see that every non-trivial sequence becomes composed of runs after some point. Here is our initial sequence with nodes parenthesized:

\begin{center}
\begin{tabular}{c@{\hspace{2mm}}c@{\hspace{2mm}}c@{\hspace{2mm}}c
@{\hspace{2mm}}c@{\hspace{2mm}}c@{\hspace{2mm}}c
@{\hspace{2mm}}c@{\hspace{2mm}}c@{\hspace{2mm}}c
@{\hspace{2mm}}c@{\hspace{2mm}}c@{\hspace{2mm}}c
@{\hspace{2mm}}c@{\hspace{2mm}}c@{\hspace{2mm}}c
@{\hspace{2mm}}c@{\hspace{2mm}}c}

0 & (1, & 1) & 2 &  (3, & 5) & 4 & (3, & 7) & 5 & 6 & (11, & 17) & 14 & (31, & 15) & 23 & 19 \\
21 & 20 & (41, & 61) & 51 & 56 & (107, & 163) & 135 & 149 & 142 & (97, & 239) & 168 & (37, & 41) & 39 & 40 \\
(79, & 17) & 48 & \textbf{(13,} & \textbf{61)} & \textbf{37} & \textbf{49} & \textbf{43} & \textbf{46} & \textbf{(89,} & \textbf{45)} & \textbf{67} & \textbf{56} & \textbf{(41,} & \textbf{97)} & \textbf{69} & \textbf{83} & \textbf{76} \\
\textbf{(53,} & \textbf{43)} & \textbf{48} & (13, & 61) & 37  & \ldots

\end{tabular}
\end{center}

Again, we can treat each substring of $O \dotso OE$ as a unit, starting when the first two terms of such a substring are coprime and not preceded by an odd term. The corresponding terms comprise a run, and the first two terms of the run comprise a node. Let us now construct our first sequence path (Figure~\ref{fig:digraph1}). For notational convenience we weight the digraph by assigning to each arc the length of the run generated by the node at the arc's tail.

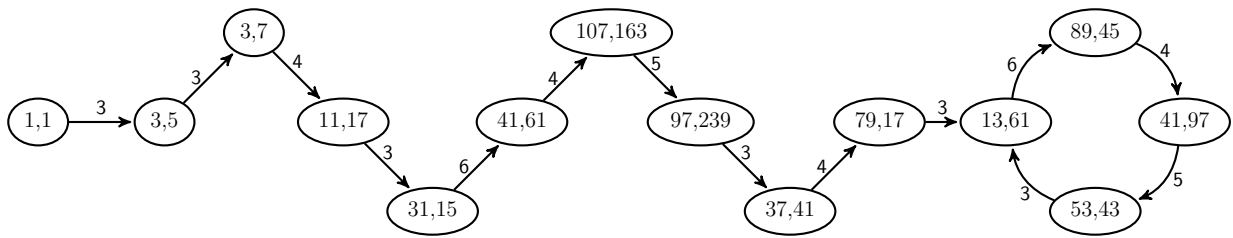
\begin{figure}[h]
\centering

\begin{tikzpicture}[->,>=stealth',shorten >=1pt,auto,node distance=2.4cm,scale=0.7,transform shape,
  thick,main node/.style={ellipse,draw}]

\node[main node] (0) {1,1};
\node[main node] (1) [right of=0] {3,5};
\node[main node] (2) [above right of=1] {3,7};
\node[main node] (3) [below right of=2] {11,17};
\node[main node] (4) [below right of=3] {31,15};
\node[main node] (5) [above right of=4] {41,61};
\node[main node] (6) [above right of=5] {107,163};
\node[main node] (7) [below right of=6] {97,239};
\node[main node] (8) [below right of=7] {37,41};
\node[main node] (9) [above right of=8] {79,17};
\node[main node] (c1) [right of=9] {13,61};
\node[main node] (c2) [above right of=c1] {89,45};
\node[main node] (c3) [below right of=c2] {41,97};
\node[main node] (c4) [below left of=c3] {53,43};

\path[every node/.style={font=\sffamily\small}]
(0) edge node [above] {3} (1)
(1) edge node [left] {3} (2)
(2) edge node [above] {4} (3)
(3) edge node [above] {3} (4)
(4) edge node [left] {6} (5)
(5) edge node [left] {4} (6)
(6) edge node [above] {5} (7)
(7) edge node [above] {3} (8)
(8) edge node [left] {4} (9)
(9) edge node [above] {3} (c1)
(c1) edge [bend left] node [left] {6} (c2)
(c2) edge [bend left] node [above] {4} (c3)
(c3) edge [bend left] node [right] {5} (c4)
(c4) edge [bend left] node [below] {3} (c1);

\end{tikzpicture}

\caption{Path generated by the 0,1 sequence}
\label{fig:digraph1}

\end{figure}

We could then imagine the infinite digraph generated by all non-trivial subprime Fibonacci sequences, as we have done for the $3x+1$ sequences in Figure~\ref{fig:collatz}. If the 18-cycle were the only non-trivial cycle, the subprime Fibonacci digraph would look like Figure \ref{fig:tributaries}.

\begin{figure}[H]
\centering

\begin{tikzpicture}[->,>=stealth',shorten >=1pt,auto,node distance=2.4cm,scale=0.7,transform shape,
  thick,main node/.style={ellipse,draw}]

\node[main node] (c1) {13,61};
\node[main node] (c2) [above right of=c1] {89,45};
\node[main node] (c3) [below right of=c2] {41,97};
\node[main node] (c4) [below left of=c3] {53,43};

\node[main node] (a1) [left of=c1] {79,17};
\node[main node] (a2) [below left of=a1] {37,41};
\node (a3) [left of=a2] {...};

\node[main node] (b2) [left of=a1] {61,29};
\node[main node] (b3) [left of=b2] {53,17};

\node[main node] (d3) [above left of=b2] {373,37};

\node[main node] (e1) [above left of=c2] {49,43};
\node[main node] (e2) [left of=e1] {41,93};
\node (e3) [left = 0.6 cm of e2] {...};

\node[main node] (f1) [above right of=c2] {109,13};
\node[main node] (f2) [right of=f1] {27,71};
\node[main node] (f3) [right of=f2] {79,23};
\node (f4) [below right of=f3] {...};

\node[main node] (g2) [below right of=f1] {71,49};

\node[main node] (h1) [below right of=c3] {45,67};

\node[main node] (i1) [right = 0.7 cm of c3] {971,305};
\node (i2) [right = 0.7 cm of i1] {...};

\node[main node] (j1) [below left of=c4] {69,83};

\node[main node] (k1) [below right of=c4] {2027,1031};
\node (k2) [right = 0.8 cm of k1] {...};

\path[every node/.style={font=\sffamily\small}]
(a3) edge node [above] {3} (a2)
(a2) edge node [left] {4} (a1)
(a1) edge node [above] {3} (c1)
(b3) edge node [above] {4} (b2)
(b2) edge node [above] {7} (a1)
(d3) edge node [above] {6} (b2)
(e3) edge node [above] {4} (e2)
(e2) edge node [above] {4} (e1)
(e1) edge node [above] {3} (c2)
(f4) edge node [right] {3} (f3)
(f3) edge node [above] {5} (f2)
(f2) edge node [above] {4} (f1)
(f1) edge node [above] {7} (c2)
(g2) edge node [right] {3} (f1)
(h1) edge node [right] {3} (c3)
(i2) edge node [above] {4} (i1)
(i1) edge node [above] {3} (c3)
(j1) edge node [left] {3} (c4)
(k2) edge node [above] {4} (k1)
(k1) edge node [right] {4} (c4)
(c1) edge [bend left] node [left] {6} (c2)
(c2) edge [bend left] node [above] {4} (c3)
(c3) edge [bend left] node [right] {5} (c4)
(c4) edge [bend left] node [below] {3} (c1);

\end{tikzpicture}

\caption{Some paths leading to the 18-cycle}
\label{fig:tributaries}

\end{figure}
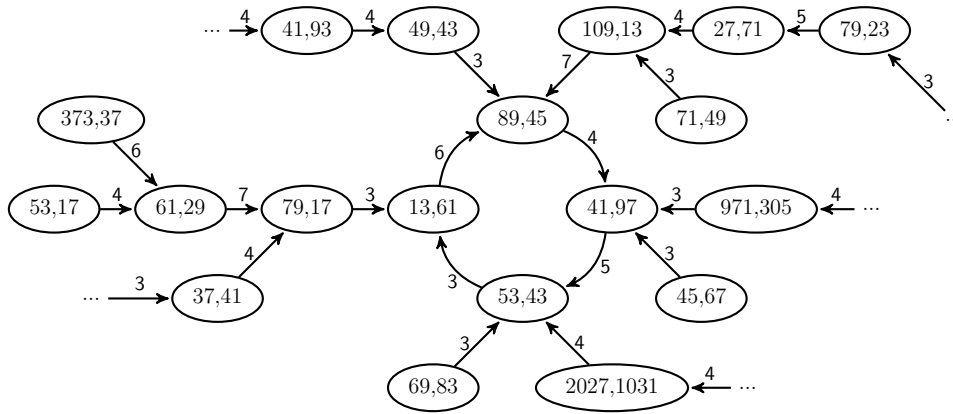

One reason this digraph is a nice representation is that it shows how many nodes are \textbf{direct predecessors} to a single node. If a node is a predecessor (not necessarily direct) to a node or cycle, we say it is \textbf{tributary} to the node or cycle. How could we grow this graph? One way is to go outwards from known nodes. This would require a way of enumerating a node's direct predecessors, which can be done with some work. For example, with the node $(89,45)$ of the 18-cycle:

\begin{enumerate}
\item A preceding even term $t$ must satisfy $t + 89 = 45q$, where $q$ is 1 or 3 ($q = 2$ makes $t$ odd, and $q$ cannot exceed a prime factor of 45), which gives $t = -44$ or 46, so the node must always be preceded by 46.
\item Let the positive odd term before $t$, if it exists, be $s$. Then $s + t = 89p$, where $p$ is 1 or an odd prime $\le 89$. For $t=46$, possible values of $s$ are $43$, 221, 399, etc.
\item The term before $s$ must also be odd. If this term is $r$, it must satisfy $r + s = 2t$ since $r+s$ is even. For example, $s=43$ gives $r=49$, and none of the other possibilities for $s$ would work, since they would make $r\le0$.
\item Since we are only looking for possible direct predecessors (positive, odd coprimes), we can assume that each prior step involved division by two. Working backwards gives
\[
\ldots,\:-83,\:109,\:13,\:61,\:37,\:49,\:43,\:46.
\]
\item Thus, our direct predecessors are exactly
\[
(109,13),\:(13,61),\:(61,37),\:(37,49),\:\text{and}\:(49,43),
\]
only two of which are depicted in Figure~\ref{fig:tributaries}. 
\end{enumerate}

Contrast this with Figure~\ref{fig:collatz}, where there are at most two direct predecessors as a result of the sequence definition. This procedure for constructively generating nodes is quite finicky, however, and discourages a graph-theoretic approach to analysis. Not to say that it is impossible; there exist reductions and results on the $3x+1$ graph \cite{And02} \cite{Urv00}, and we encourage the reader to explore the possibility of deriving properties for the subprime digraph from this perspective.

Do sequences all enter the 18-cycle we have already seen, i.e., is the subprime digraph weakly connected?  Let us start at the node $(151,227)$:

\begin{center}
\begin{tabular}{c@{\hspace{3pt}}c@{\hspace{3pt}}c@{\hspace{3pt}}c
@{\hspace{3pt}}c@{\hspace{3pt}}c@{\hspace{3pt}}c
@{\hspace{3pt}}c@{\hspace{3pt}}c@{\hspace{3pt}}c
@{\hspace{3pt}}c@{\hspace{3pt}}c@{\hspace{3pt}}c
@{\hspace{3pt}}c@{\hspace{3pt}}c@{\hspace{3pt}}c
@{\hspace{3pt}}c@{\hspace{3pt}}c@{\hspace{3pt}}c}

(151, & 227) & 189 & 208 & (397, & 121) & 259 & 190 & (449, & 213) & 331 & 272 & (201, & 43) & 122 & (55, & 59) & 57 & 58\\
\textbf{(23,} & \textbf{27)} & \textbf{25} & \textbf{26} & \textbf{(17,} & \textbf{43)} & \textbf{30} & \textbf{(73,} & \textbf{103)} & \textbf{88} & \textbf{(191,} & \textbf{93)} & \textbf{142} & \textbf{(47,} & \textbf{63)} & \textbf{55} & \textbf{59} & \textbf{57} & \textbf{58}\\
(23, & 27) & \ldots

\end{tabular}
\end{center} 
and we are in a 19-cycle whose first repeated node is $(23,27)$. Note that though 55, 59 are the first two repeated terms, they only act as a node the first time through; thus $(47,63)$ being a node with the terms 55, 59 in its run does not preclude $(55,59)$ from being a node in another context. Both nodes are tributary to the node $(23,27)$. Furthermore,

\begin{itemize}
\item If you start with $5,13$ you will enter a 136-cycle through node $(47,23)$ (though simpler starting terms like $1,4$ suffice).
\item If you start with $5, 23$ you will enter a 56-cycle through node $(119,109)$ with 5693 as its largest term.
\item The node $(37,199)$ generates an 11-cycle.
\item The node $(127,509)$ generates a 10-cycle.
\end{itemize}

Figure~\ref{fig:cycles} displays the nodes in these non-trivial cycles. We checked sequences that start with two numbers 1,000,000 or below and found no non-trivial cycles other than these six, a bound easily extendable by our more computationally-minded readers.

\begin{figure}[H]

\centering

\begin{tikzpicture}[->,>=stealth,scale=0.75,transform shape,thick,main node/.style={ellipse,draw}]

\node[main node] (a1) at (19,1.5) {47,23};
\node[main node] (a2) at (16.5,0) {61,31};
\node[main node] (a3) at (13.5,0) {11,19};
\node[main node] (a4) at (10.75,0) {11,9};
\node[main node] (a5) at (8,0) {19,29};
\node[main node] (a6) at (5.25,0) {53,11};
\node[main node] (a7) at (2.5,0) {43,25};
\node[main node] (a8) at (0,1.5) {59,31};
\node[main node] (a9) at (0,3) {83,11};
\node[main node] (a10) at (0,4.5) {67,35};
\node[main node] (a11) at (0,6) {13,59};
\node[main node] (a12) at (0,7.5) {19,11};
\node[main node] (a13) at (0,9) {9,23};
\node[main node] (a14) at (0,10.5) {13,29};
\node[main node] (a15) at (0,12) {47,71};
\node[main node] (a16) at (0,13.5) {127,63};
\node[main node] (a17) at (0,15) {13,97};
\node[main node] (a18) at (2.5,15) {131,69};
\node[main node] (a19) at (5.25,15) {13,113};
\node[main node] (a20) at (8,15) {151,239};
\node[main node] (a21) at (11,15) {141,347};
\node[main node] (a22) at (13.85,15) {197,147};
\node[main node] (a23) at (16.5,15) {29,67};
\node[main node] (a24) at (19,15) {23,71};
\node[main node] (a25) at (19,13.5) {109,55};
\node[main node] (a26) at (19,12) {137,73};
\node[main node] (a27) at (19,10.5) {63,157};
\node[main node] (a28) at (19,9) {89,199};
\node[main node] (a29) at (19,7.5) {49,193};
\node[main node] (a30) at (19,6) {41,63};
\node[main node] (a31) at (19,4.5) {23,25};
\node[main node] (a32) at (19,3) {7,31};

\node[main node] (b1) at (6,5.1) {23,27};
\node[main node] (b2) at (7.5,6.5)  {17,43};
\node[main node] (b3) at (10.25,6.5) {73,103};
\node[main node] (b4) at (12,5.1) {191,93};
\node[main node] (b5) at (9,4) {47,63};

\node[main node] (c1) at (3,3.75) {223,337};
\node[main node] (c2) at (3,5.25) {617,299};
\node[main node] (c3) at (3,6.75) {757, 405};
\node[main node] (c4) at (4,8.25) {347, 291};
\node[main node] (c5) at (7.5,8.25) {617, 929};
\node[main node] (c6) at (11,8.25)  {1663, 825};
\node[main node] (c7) at (14.5,8.25)  {2069, 3313};
\node[main node] (c8) at (15.5,6.75) {5693, 1739};
\node[main node] (c9) at (15.5,5.25) {1091, 437};
\node[main node] (c10) at (15.5,3.75) {1201, 655};
\node[main node] (c11) at (14.5,2.25) {1583, 837};
\node[main node] (c12) at (11,2.25) {89, 433};
\node[main node] (c13) at (7.5,2.25) {217, 521};
\node[main node] (c14) at (4, 2.25) {119, 109};

\node[main node] (d1) at (3,10.5) {37,199};
\node[main node] (d2) at (3,13.5)  {317,145};
\node[main node] (d3) at (4.75,12) {419,607};

\node[main node] (e1) at (15.5,10.5) {127,509};
\node[main node] (e2) at (13.75,12) {827,229};
\node[main node] (e3) at (15.5,13.5)  {757,257};

\node[main node] (f1) at (8,13.5) {13,61};
\node[main node] (f2) at (10.5, 13.5) {89,45};
\node[main node] (f3) at (10.5,10.5) {41,97};
\node[main node] (f4) at (8,10.5) {53,43};

\node[font=\fontsize{16}{16}\bfseries] at (2.5, 1) {136-cycle};
\node[font=\fontsize{16}{16}\bfseries] at (5.5, 3.25) {56-cycle};
\node[font=\fontsize{16}{16}\bfseries] at (9, 5.25) {19-cycle};
\node[font=\fontsize{16}{16}\bfseries] at (3.5, 9.45) {11-cycle};
\node[font=\fontsize{16}{16}\bfseries] at (9.25, 9.45) {18-cycle};
\node[font=\fontsize{16}{16}\bfseries] at (15, 9.45) {10-cycle};

\path[every node/.style={font=\sffamily\small}]

(a1) edge [bend left] node [right] {5} (a2)
(a2) edge node [below] {3} (a3)
(a3) edge node [below] {5} (a4)
(a4) edge node [below] {3} (a5)
(a5) edge node [below] {3} (a6)
(a6) edge node [below] {3} (a7)
(a7) edge [bend left] node [below] {3} (a8)
(a8) edge node [left]{4} (a9)
(a9) edge node [left] {5} (a10)
(a10) edge node [left] {7} (a11)
(a11) edge node [left] {3} (a12)
(a12) edge node [left] {5} (a13)
(a13) edge node [left] {3} (a14)
(a14) edge node [left] {6} (a15)
(a15) edge node [left] {5} (a16)
(a16) edge node [left] {8} (a17)
(a17) edge node [above] {4} (a18)
(a18) edge node [above] {3} (a19)
(a19) edge node [above] {4} (a20)
(a20) edge node [above] {5} (a21)
(a21) edge node [above] {3} (a22)
(a22) edge node [above] {3} (a23)
(a23) edge node [above] {3} (a24)
(a24) edge node [right] {6} (a25)
(a25) edge node [right] {3} (a26)
(a26) edge node [right] {8} (a27)
(a27) edge node [right] {3} (a28)
(a28) edge node [right] {3} (a29)
(a29) edge node [right] {6} (a30)
(a30) edge node [right] {3} (a31)
(a31) edge node [right] {3} (a32)
(a32) edge node [right] {5} (a1)

(b1) edge [bend left] node [left] {4} (b2)
(b2) edge node [above] {3} (b3)
(b3) edge [bend left] node [right] {3} (b4)
(b4) edge [bend left] node [below] {3} (b5)
(b5) edge [bend left] node [below] {6} (b1)

(c1) edge node [left] {3} (c2)
(c2) edge node [left] {3} (c3)
(c3) edge [bend left] node [left] {7} (c4)
(c4) edge node [above] {5} (c5)
(c5) edge node [above] {5} (c6)
(c6) edge node [above] {3} (c7)
(c7) edge [bend left] node [right] {4} (c8)
(c8) edge node [right] {3} (c9)
(c9) edge node [right] {3} (c10)
(c10) edge [bend left] node [right] {3} (c11)
(c11) edge node [above] {3} (c12)
(c12) edge node [above] {5} (c13)
(c13) edge node [above] {6} (c14)
(c14) edge [bend left] node [left] {3} (c1)

(d1) edge node [left] {3} (d2)
(d2) edge [bend left] node [right] {4} (d3)
(d3) edge [bend left] node [right] {4} (d1)

(e1) edge [bend left] node [left] {3} (e2)
(e2) edge [bend left] node [left] {3} (e3)
(e3) edge node [right] {4} (e1)

(f1) edge node [above] {6} (f2)
(f2) edge node [right] {4} (f3)
(f3) edge node [below] {5} (f4)
(f4) edge node [left] {3} (f1)
;

\end{tikzpicture}

\caption{Digraphs of the six known non-trivial cycles}
\label{fig:cycles}

\end{figure}
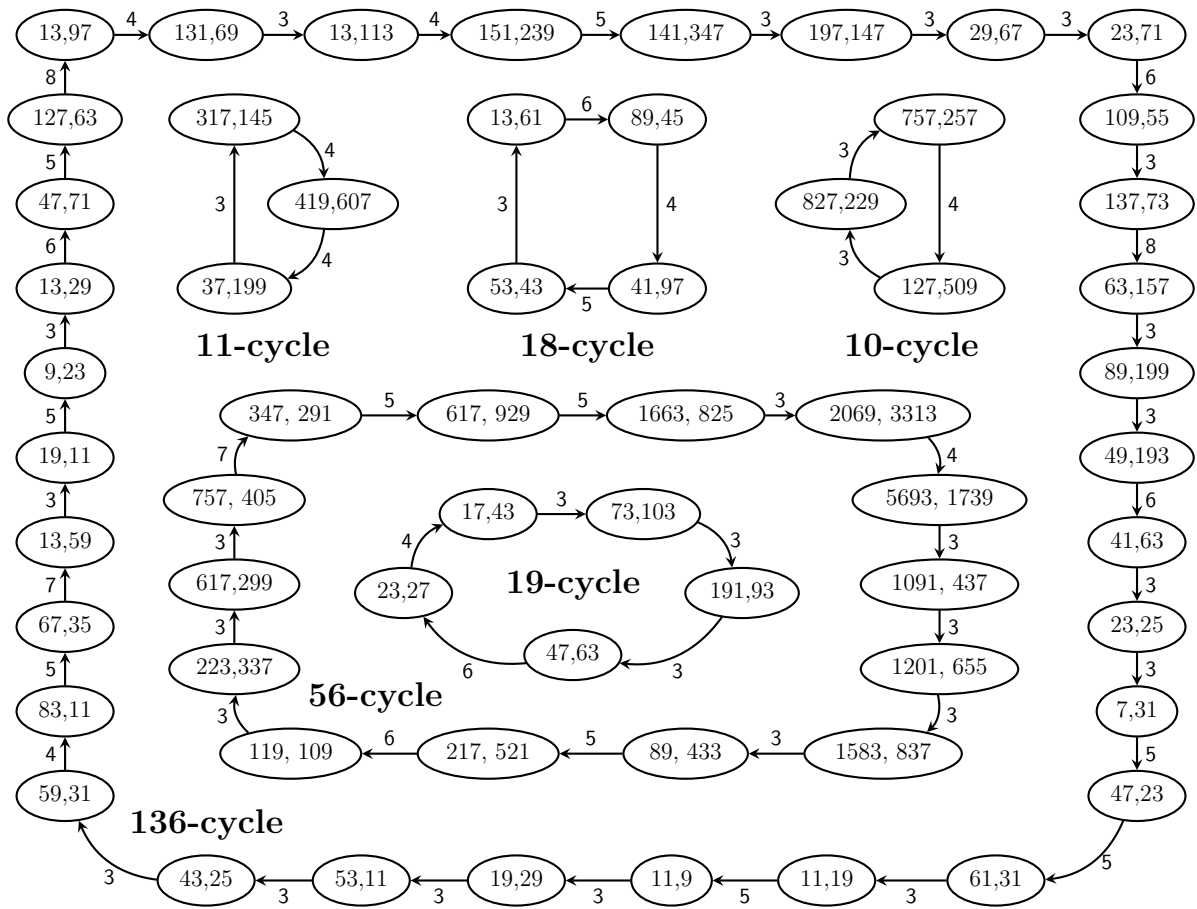

In Table~\ref{tab:cycledist} the headings indicate the range for the first two terms of the sequence and the entries are the number of occurrences for each cycle length. The proportion of pairs which generate each non-trivial cycle stabilizes as the range for starting terms increases. Additionally, non-trivial cycles appear to be distributed among the starting pairs rather arbitrarily. However, trivial cycles decrease in proportion since a cycle $a,a\dotsc$ requires all earlier terms to be multiples of $a$. Applying the `direct predecessor' method shows why this is, and how this makes relatively few starting conditions lead to a given trivial cycle.

\begin{table}[H]

\centering
\begin{tabular}{ |>{\bfseries} c |  l | l | l | l | l |}
\hline
\textbf{Cycle length} & $a,b \le10$ & $a,b \le 10^2$ & $a,b \le10^3$ & $a,b \le10^4$ & $a,b \le 10^5$ \\ \hline
1 & 14 & 348 & 10022 & 320531 & 11588563 \\ \hline
10 & 0 & 0 & 33 & 6310 & 668764 \\ \hline
11 & 0  & 0 & 390 & 34520 & 3479974 \\ \hline
18 & 63 & 4837 & 467014 & 46985673 & 4709133000 \\ \hline
19 & 0 & 249 & 30490 & 3090886 & 307710709 \\ \hline
56 & 0 & 188 & 21990 & 2238493 & 224936180 \\ \hline
136 & 23 & 4378 & 470061 & 47323587 & 4742482810 \\ \hline
\end{tabular}

\caption{Distribution of final cycle lengths generated by starting node $(a,b)$}
\label{tab:cycledist}

\end{table}

Generally, non-trivial sequences seem to exhibit pseudo-random behavior in their terms and their digraphs, regarding the length of their paths, the nodes they pass, and their associated cycles. We believe this is partly due to the construction, which relies on prime factorizations (the relationship between these factorizations and addition is not well understood). A similar difficulty is seen in the earlier RATS sequences, where the relationship between base-dependent reversal/sort and addition is essential to analysis.

However, another source of apparent randomness seems to be the iteration's conditionality itself, as with the $3x+1$ sequences. For example, if one considers a variant of subprime Fibonacci where only division by 5 occurs (when the sum is divisible by 5), similar observations as the above arise. We begin to feel the apparent intractability mentioned earlier of proving results on `destinies' of sequences like these.

\section{End conditions}
\label{sec:endconds}

A sequence must either end in a trivial cycle, a non-trivial cycle, or increase indefinitely. These \textbf{end conditions} are of interest; however, it seems more likely here than in the $3x+1$ problem that sequences do not increase indefinitely. Here is an informal argument that supports such a conjecture, which relies on the following observation:

\begin{prop}\label{runbounds}
The terms of the run defined by $(a,b)$ are bounded, above and below, in the interval $[a,b]$, and terms after the node are bounded in the interval $[\frac{M}{4},M]$ where $M = \max(a,b)$. In general, two consecutive run terms bound the rest of the run.
\end{prop}

This is because after the first two odds $a,b$ (the node terms) of a run, every successive term up to the terminating even is due to a division by 2; within the run we are averaging two consecutive terms at a time. Now, given the maximum of the current run $M$, what can we say about the maximum of the next run? Let $s,t \le M$ be the last two terms of the current run, and let $c,d$ be the first two terms of the next run. By Proposition~\ref{runbounds}, $t \ge \frac{M}{4}$. Remember that $t$ is even and $s$ is odd:

\begin{enumerate}
\item $c = s+t$ is prime, so $c < 2M$ (strict since $s+t$ is odd).
\begin{enumerate}
\item $t+c$ is prime, so $d = t+c < 3M$. The next run is bounded by $\max(c,d) < 3M$.
\item $t+c$ is composite, so $d \le \frac{t+c}{3} < M$. The next run is bounded by $\max(c,d) < 2M$.
\end{enumerate}
\item $s+t$ is composite, so $c \le \frac{s+t}{3} < \frac{2M}{3}$ (strict since $s+t$ is odd).
\begin{enumerate}
\item $t+c$ is prime, so $d = t+c < \frac{5M}{3}$. The next run is bounded by $\max(c,d) < \frac{5M}{3}$.
\item $t+c$ is composite, so $d \le \frac{t+c}{3} < \frac{5M}{9}$. The next run is bounded by $\max(c,d) < \frac{2M}{3}$.
\end{enumerate}
\end{enumerate}

What is the probability that $s+t$ or $t+c$ is prime? Assuming that our values are random and independent, the probability that each individually is prime is at most $\frac{1}{\ln{t}} \le \frac{1}{\ln{M/4}}$ by the prime number theorem. Then the next run is bounded by $\frac{2}{3}M$ with probability at least $\left(1-\frac{1}{\ln{M/4}}\right)^2$, which approaches 1 as $M$ increases.

Considering that starting terms $5, 23$ produce a term as large as 5693, the difficulty of proving the non-existence of divergent sequences seems comparable in difficulty to the same problem for the $3x+1$ sequences, and so we do not dwell on it further. However, there are other types of result we can prove about sequences' later conditions:

\begin{prop}\label{infprime}
A non-trivial sequence contains infinitely many primes (not necessarily distinct), each greater than its two preceding terms.
\end{prop}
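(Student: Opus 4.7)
My plan is to reduce the proposition to a clean statement about the auxiliary integers $p_n$ defined by $a_n p_n = a_{n-1}+a_{n-2}$, where $p_n = 1$ when this sum is already prime and $p_n$ is the smallest prime divisor of the sum otherwise. The key elementary observation is the equivalence: assuming all terms positive, $a_n$ exceeds both $a_{n-1}$ and $a_{n-2}$ if and only if $p_n = 1$. Indeed, $p_n \geq 2$ forces $a_n \leq (a_{n-1}+a_{n-2})/2 \leq \max(a_{n-1},a_{n-2})$, while $p_n = 1$ gives $a_n = a_{n-1}+a_{n-2}$, which is prime and strictly exceeds each predecessor. So the proposition becomes: any non-trivial sequence has $p_n = 1$ for infinitely many $n$. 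By \S\ref{sec:intro} we may assume the sequence is eventually positive and composed of runs, and either it enters a non-trivial cycle or it is unbounded.

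For the unbounded case I would argue by contradiction. Suppose $p_n = 1$ only finitely often; past some index $N$ every term satisfies $a_n \leq \max(a_{n-1}, a_{n-2})$. Then $M_n := \max(a_n, a_{n-1})$ is non-increasing for $n \geq N$, hence eventually constant among the positive integers. The sequence is then bounded, so the state pair $(a_{n-1}, a_n)$ assumes only finitely many values and the sequence must enter a cycle, contradicting unboundedness.

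For the cyclic case I would use a telescoping identity. Summing $a_n p_n = a_{n-1}+a_{n-2}$ over one period of length $k$ (indices mod $k$) yields $\sum_n a_n p_n = 2\sum_n a_n$, i.e.\
\[
\sum_{n=0}^{k-1} a_n(p_n - 2) = 0.
\]
If no $p_n$ equals $1$ in the cycle, every summand is nonnegative (since each $p_n \geq 2$ and each $a_n > 0$), forcing $p_n = 2$ throughout. I would then rule out this possibility: $p_n \equiv 2$ means $a_{n-1}+a_{n-2}$ is always even, so all cycle terms share a common parity. The all-even subcase is incompatible with the run-structure of a non-trivial sequence recorded in \S\ref{sec:intro} (once an odd term appears, even terms are isolated), and the all-odd subcase reduces to the linear recurrence $2a_n = a_{n-1}+a_{n-2}$ with characteristic roots $1$ and $-1/2$, whose only positive periodic integer solutions are the constant sequences, i.e.\ trivial cycles. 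Hence some $p_n$ in the cycle equals $1$, and since the cycle repeats we obtain infinitely many such indices, each contributing a prime greater than its two predecessors.

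The main obstacle I anticipate is cleanly closing off the all-$p_n = 2$ branch of the cyclic argument: the linear-recurrence elimination is easy, but forbidding an all-even non-trivial cycle requires a careful appeal to the parity and run structure established in the introduction rather than a purely algebraic argument. Everything else is bookkeeping around the identity $a_n p_n = a_{n-1}+a_{n-2}$ and the monotonicity of $M_n$.
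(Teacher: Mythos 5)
Your proof is correct, and its backbone is the same observation the paper uses: a term exceeds both of its predecessors exactly when no division occurs, so if such primes stop appearing then a division happens at every step and $\max(a_{n-1},a_n)$ is non-increasing. The paper's entire proof is that single descent (``the maximum value of any two consecutive terms decreases over time \ldots\ this value cannot decrease forever''). Where you genuinely diverge is in splitting off the eventually-periodic case and closing it with the telescoping identity $\sum a_n(p_n-2)=0$ over one period, which forces $p_n\equiv 2$ and is then eliminated by parity and the averaging recurrence; the paper does not do this here, but your identity is precisely the row-sum of the linear system (\ref{eq:system}) that the authors later use to prove Theorem~\ref{onerun}, so you have rediscovered a tool they deploy two sections on. Your extra branch is not wasted effort: the paper's one-liner tacitly assumes the non-increasing maximum strictly decreases infinitely often, and the only way that can fail is for it to become constant --- exactly the degenerate situation your cyclic case disposes of. Two small streamlinings: the case split into ``unbounded versus cyclic'' is avoidable, since ``$p_n=1$ only finitely often'' already makes $M_n$ eventually constant and one can argue directly that a constant maximum forces a trivial sequence; and in the all-$p_n=2$ branch you do not need the parity discussion at all, because every step is then the average $2a_n=a_{n-1}+a_{n-2}$ regardless of parity, whose only positive periodic solutions are constant, i.e.\ trivial --- so the appeal to the run structure of \S\ref{sec:intro} for the all-even subcase can be dropped.
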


\begin{proof}
If, after some point, the sequence contains no two consecutive terms that sum to a prime, then at each step a division happens and so the maximum value of any two consecutive terms decreases over time. Since this value cannot decrease forever, we get a contradiction. This proposition is stronger than only asserting infinitely many primes, as a prime could be generated after the division by a prime factor.
\end{proof}

\begin{prop}\label{coprime}
After some point consecutive terms of a non-trivial sequence are always coprime. 
\end{prop}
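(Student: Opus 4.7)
The plan is to track $d_n := \gcd(a_n, a_{n+1})$ and show that (i) the sequence $(d_n)$ is non-increasing in the divisibility order, and (ii) its eventual stable value must be $1$. Throughout I work on the tail of the sequence where all terms are positive, which exists for any non-trivial sequence by the remarks in Section~\ref{sec:intro}.

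For step (i), a short case check suffices. If $a_n + a_{n+1}$ is prime, then $a_{n+2} = a_n + a_{n+1}$; otherwise $a_{n+2} = (a_n + a_{n+1})/p$ where $p$ is the least prime divisor of $a_n + a_{n+1}$. In either case we have $a_n + a_{n+1} = p\,a_{n+2}$ for some positive integer $p$ (with $p=1$ in the prime case). So $d_{n+1}\mid a_{n+2}\mid a_n+a_{n+1}$, and combined with $d_{n+1}\mid a_{n+1}$ this gives $d_{n+1}\mid a_n$. Hence $d_{n+1}\mid\gcd(a_n,a_{n+1})=d_n$. In particular the positive integer sequence $(d_n)$ is weakly decreasing in magnitude and must stabilize at some value $D\ge 1$.

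For step (ii), suppose for contradiction that $D>1$. By Proposition~\ref{infprime}, the sequence contains infinitely many primes $a_N$, each strictly greater than both $a_{N-1}$ and $a_{N-2}$. Pick such an $N$ past the stabilization index, so that $D\mid a_N$. Since $a_N$ is prime and $D>1$, we must have $D=a_N$. But then $D\mid a_{N-1}$ together with $a_{N-1}>0$ forces $D\le a_{N-1}<a_N=D$, a contradiction. Hence $D=1$ and consecutive terms are eventually coprime.

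The main obstacle I anticipate is not the chain of deductions above, which is short, but rather the bookkeeping of what ``eventually'' means: one needs to be past the transient phase where signs may still change, zeros may appear, or long even prefixes may occur. No new ideas are required for this, since Section~\ref{sec:intro} already establishes that a non-trivial sequence eventually consists of positive terms organized into runs of odd terms separated by isolated evens, and Proposition~\ref{infprime} supplies strict-maximum primes arbitrarily far along the tail.
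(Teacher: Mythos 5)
Your proof is correct and takes essentially the same route as the paper: the gcd of consecutive terms is non-increasing (each divides the previous, via $\gcd(b,\frac{a+b}{p}) \mid \gcd(b,a+b)=\gcd(a,b)$), and the strict-maximum primes supplied by Proposition~\ref{infprime} force the eventual value to be $1$. The paper simply concludes directly that once a prime term exceeds both predecessors its gcd with the preceding term is $1$, rather than arguing by contradiction on the stabilized value $D$; the difference is cosmetic.
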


\begin{proof}
The greatest common divisor of two consecutive terms of the sequence cannot increase, since $\gcd(a,b) = \gcd(b,a+b)$ and so $\gcd\left(b,\frac{a+b}{p}\right) \leq \gcd(a,b)$. A non-trivial sequence must include a prime larger than its two preceding terms. Thereafter
the GCD of any two consecutive terms is 1.
\end{proof} 

\begin{cor}\label{coprimestart}
If the two starting terms of a sequence are coprime, the sequence is non-trivial.
\end{cor}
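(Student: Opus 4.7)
The plan is to argue the contrapositive: every trivial sequence has non-coprime starting terms. The argument rests on a monotonicity property already implicit in the proof of Proposition~\ref{coprime}, which I would first extract as a standalone observation: for any two consecutive terms $a_n, a_{n+1}$ of a subprime Fibonacci sequence, $\gcd(a_{n+1}, a_{n+2})$ divides $\gcd(a_n, a_{n+1})$. Indeed, $\gcd(a_n, a_{n+1}) = \gcd(a_{n+1}, a_n + a_{n+1})$, and $a_{n+2}$ is either $a_n + a_{n+1}$ itself or its quotient by the smallest prime factor; in either case $a_{n+2}$ divides $a_n + a_{n+1}$, so $\gcd(a_{n+1}, a_{n+2})$ divides $\gcd(a_{n+1}, a_n + a_{n+1}) = \gcd(a_n, a_{n+1})$. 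Iterating, $\gcd(a_n, a_{n+1})$ divides $\gcd(a_0, a_1)$ for every $n$.

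Next, I would suppose the sequence were trivial, so that it eventually enters a constant run $a, a, a, \ldots$ with $a \ne \pm 1$; we may take $|a| \geq 2$, since $a = 0$ forces $a_0 = a_1 = 0$ and hence non-coprime starting terms. Then for all sufficiently large $n$ we have $\gcd(a_n, a_{n+1}) = |a| \geq 2$, and the monotonicity above forces $\gcd(a_0, a_1) \geq |a| \geq 2$, contradicting coprimality of the starting pair. The contrapositive gives the corollary.

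I do not foresee a substantive obstacle here; once consecutive gcds are seen to be non-increasing under divisibility, the argument collapses into a single line. The only points meriting light attention are the sign conventions (gcd is insensitive to sign, so mixed-sign or negative starts cause no difficulty) and zero terms, but $\gcd(0, b) = |b|$ means $(0, b)$ is coprime only for $|b| = 1$, which generates the Fibonacci-like trajectory into the $18$-cycle and is therefore non-trivial, exactly as claimed.
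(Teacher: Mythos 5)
Your proof is correct and is exactly the argument the paper intends: the corollary is stated without its own proof precisely because it follows from the divisibility monotonicity of consecutive gcds established in Proposition~\ref{coprime} (a coprime start keeps the gcd at 1 forever, while a trivial tail $a,a,\dotsc$ with $a\ne\pm1$ forces gcd $\ge 2$). Your extra care with signs and the zero cases is sound but adds nothing beyond what the paper's argument already covers.
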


These results help justify our earlier definitions of node and run, as they ensure that only non-trivial sequences produce digraphs and that digraphs are unique representations (since a run uniquely leads into another run with no intervening terms, as evens cannot appear consecutively once nodes come into play).

Proposition~\ref{coprime} and Corollary~\ref{coprimestart} also simplify the search for cycles via starting conditions. Since all starting terms $a,b$ with $\gcd(a,b) >1$ produce trivial sequences or reach $\gcd(a,b) = 1$ for consecutive terms, it suffices to study starting conditions with $\gcd(a,b) =1$ to enumerate all non-trivial end conditions. Caching certain intermediate nodes and using a lookup table of primes provides a more efficient search method for new cycles than testing all positive integer ordered pairs.

\section{The general system}

We devote the rest of our paper to the cycles that non-trivial sequences generate. By the definition of a run, a non-trivial cycle must consist of a concatenation of runs. It follows from Proposition~\ref{coprime} that any two consecutive terms in a non-trivial cycle are coprime.

When we build a subprime Fibonacci sequence we add two numbers first then divide by a prime number or 1. Let us correspond to each term of a sequence or a cycle the smallest prime divisor (or 1) by which the sum of the two prior terms was divided. These divisors are the sequence's or cycle's \textbf{signature}. For example, the 10-cycle 127, 509, 318, 827, 229, 528, 757, 257, 507, 382 has signature 7, 1, 2, 1, 5, 2, 1, 5, 2, 2; the initial 7 is the divisor to get 127, after adding the preceding cycle terms 507, 382.

Signature terms can be relatively large; the 11-cycle has signature 29, 3, 2, 1, 3, 2, 2, 1, 1, 2, 2 (since one of the intermediate sums is $29\times37=1073$). Runs consist of consecutive averages, so given a run within a cycle, only its node (first two terms) has signature values not equal to 2. See this in action by noting the shape of the 10-cycle ($OOEOOEOOOE$) and comparing with the signature given earlier. Using Proposition~\ref{infprime}, this result on signatures follows:

\begin{cor}\label{largestterm}
The largest term of a cycle must be prime with a signature value of 1.
\end{cor}

\begin{proof}
Let $a$, $b$, $c$ be consecutive terms of a cycle. Then $c > \max(a,b)$ if and only if $c = a+b$, i.e., the signature value of $c$ is 1 and $c$ is prime.
\end{proof}

With the terms and the signature of a cycle, we can establish a homogeneous linear system. Let $t_1,\dotsc,t_m$ be the terms of the cycle and $s_1,\dotsc,s_m$ be the corresponding signature. Then \ldots, $t_{m-1} + t_{m} = s_1 t_1$, $t_{m} + t_1 = s_2 t_2$, $t_1 + t_2 = s_3 t_3$, \ldots, $t_{i-2} + t_{i-1} = s_i t_i$, \ldots. In matrix form,

\begin{equation}\label{eq:system}
\begin{bmatrix}
s_1 & 0 & 0 & \cdots & 0 & -1 & -1 \\
-1 & s_2 & 0 & \cdots & 0 & 0 & -1 \\
-1 & -1 & s_3 & \cdots & 0 & 0 & 0 \\
\vdots & \vdots & \vdots & \ddots & \vdots & \vdots & \vdots \\
0 & 0 & 0 & \cdots & s_{m-2} & 0 & 0 \\
0 & 0 & 0 & \cdots & -1 & s_{m-1} & 0 \\
0 & 0 & 0 & \cdots & -1 & -1 & s_{m} \\
\end{bmatrix}
\begin{bmatrix}
t_1 \\
t_2 \\
t_3 \\
\vdots \\
t_{m-2} \\
t_{m-1} \\
t_{m} \\
\end{bmatrix}
= \textbf{0}.
\end{equation}

\vspace{1em}

We can now relate signatures to cycles and begin restricting potential cycles:

\begin{theorem}\label{signaturetocycle}
No two cycles have the same signature.
\end{theorem}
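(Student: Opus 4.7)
The plan is to show that the signature $(s_1,\dots,s_m)$ determines the cycle from the linear system \eqref{eq:system} up to an overall positive rational scalar, and then invoke Proposition~\ref{coprime} to pin that scalar down to $1$. The crux is proving that the null space of the matrix $M$ in \eqref{eq:system} is at most one-dimensional for any signature that actually realizes a non-trivial cycle.

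For the dimension bound I would recast the cyclic system via transfer matrices. The recurrence $s_i t_i = t_{i-2} + t_{i-1}$ sends the state $(t_{i-1},t_i)^T$ to $(t_i,t_{i+1})^T$ by left-multiplication by $L_{i+1} := \bigl(\begin{smallmatrix} 0 & 1 \\ 1/s_{i+1} & 1/s_{i+1} \end{smallmatrix}\bigr)$, which has determinant $-1/s_{i+1}$. Iterating once around the cycle, the initial state $(t_1,t_2)^T$ must be fixed by the product $P := L_2 L_1 L_m L_{m-1} \cdots L_3$, and the projection $(t_1,\dots,t_m)\mapsto(t_1,t_2)$ is a linear isomorphism from the null space of $M$ onto the $1$-eigenspace of $P$. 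Hence it suffices to bound the latter.

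A direct computation gives $\det P = (-1)^m/(s_1 s_2\cdots s_m)$. A non-trivial cycle cannot have every $s_i=1$: that would reduce the recurrence to ordinary Fibonacci, yielding strictly increasing positive terms and precluding periodicity. Therefore $\prod_i s_i \ge 2$ and $|\det P|\le 1/2<1$. If the $1$-eigenspace of $P$ were two-dimensional, $P$ would equal the identity and $\det P$ would equal $1$, a contradiction. So the $1$-eigenspace, and hence the null space of $M$, has dimension at most one.

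To finish, suppose $(t_1,\dots,t_m)$ and $(t'_1,\dots,t'_m)$ are two non-trivial cycles with the same signature; they both sit in the same one-dimensional rational subspace, so $t'_i = \lambda t_i$ for every $i$ with some $\lambda\in\mathbb{Q}_{>0}$. Writing $\lambda = p/q$ in lowest terms, the equation $qt'_i = pt_i$ forces $q\mid t_i$ and $p\mid t'_i$ for every $i$. Proposition~\ref{coprime} says consecutive terms of a non-trivial cycle are coprime, so $p=q=1$ and the cycles coincide. The only step I expect to need a touch of care is verifying that some $s_i\ge 2$; a cleaner alternative is to cite the observation from Section~\ref{sec:intro} that every non-trivial cycle contains at least one even term, whose signature entry is necessarily $2$.
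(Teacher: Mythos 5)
Your proof is correct and shares the paper's two-step skeleton --- show the solution space of the cyclic system (\ref{eq:system}) is at most one-dimensional, then fix the integer scale --- but both steps are carried out differently. For the dimension bound, the paper argues that every term is expressible through two consecutive ones and that the wrap-around relations $At_i+Bt_{i+1}=t_i$, $Ct_i+Dt_{i+1}=t_{i+1}$ with positive coefficients cut the dimension to one; your transfer-matrix version, with $\det P=(-1)^m/(s_1\cdots s_m)$ together with the observation that some $s_i=2$ (each run's third term is an average of two odd terms, so every non-trivial cycle has a signature entry equal to $2$), is a more rigorous rendering of the same idea and neatly sidesteps the degenerate case $A=1$ that the paper's phrasing glosses over. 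For the normalization the two arguments genuinely diverge: the paper appeals to Corollary~\ref{largestterm} --- scaling the primitive integer solution by an integer $c\ge 2$ makes the largest term composite, so at most one scaling can yield a cycle --- whereas you appeal to Proposition~\ref{coprime}, concluding that the integer vector of any non-trivial cycle is primitive, so two cycles on the same rational line must coincide. Your ending is slightly cleaner in that it rests on the elementary coprimality fact rather than on Corollary~\ref{largestterm} (which itself depends on Proposition~\ref{infprime}); the paper's ending has the virtue of highlighting the primality obstruction that the authors reuse later when discarding 6-cycle candidates. Both arguments are valid, and I see no gaps in yours.
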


\begin{proof}
This is equivalent to showing that a potential signature $s_1, \dotsc, s_m$ defines at most one cycle. Given a potential signature, consider solutions for $t_1,\dotsc, t_m$ over the reals. We have a system of $m$ linear homogeneous equations in $m$ variables. In this particular set of equations, all of the variables are expressible through exactly two consecutive ones, so the space of real solutions is at most 2-dimensional. Given consecutive terms $t_i,t_{i+1}$ and positive signature values, the equations must reduce to $A t_i+ B t_{i+1} = t_i$ and $C t_i + D t_{i+1} = t_{i+1}$ for some positive $A,B,C,D$. Thus $t_i$ is expressible through $t_{i+1}$ and the solution space is at most 1-dimensional.

If the solution is 1-dimensional, let one of the terms equal 1. The terms are in constant rational proportion to each other, so we can scale all the terms until the smallest set of integer solutions is produced. The largest term may be prime; this solution is potentially a cycle. Further scaling cannot produce another cycle since the largest term would not be prime (Corollary~\ref{largestterm}).
\end{proof}

\begin{theorem}\label{onerun}
There are no non-trivial cycles of one run (i.e., one even term).
\end{theorem}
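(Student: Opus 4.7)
The plan is to assume a non-trivial cycle with exactly one run (equivalently, one even term) and derive a contradiction. Cycling the indices so that the even term sits at position $m$, I write the cycle as $t_1, t_2, \ldots, t_{m-1}, t_m$ with $t_1, \ldots, t_{m-1}$ odd, so the node is $(t_1, t_2)$. Let $s_1, \ldots, s_m$ be the signature, satisfying $t_{i-2} + t_{i-1} = s_i t_i$ with indices taken mod~$m$. For $3 \leq i \leq m$, both $t_{i-2}$ and $t_{i-1}$ are odd, so their sum is even; setting aside a degenerate sum-equals-$2$ case I will address at the end, the sum is an even composite and its smallest prime factor is $2$, so $s_i = 2$. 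Thus the signature has the shape $(s_1, s_2, 2, 2, \ldots, 2)$ with $s_1, s_2 \in \{1\} \cup \{\text{odd primes}\}$.

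The key step is to sum all $m$ signature equations. On the left each $t_j$ appears exactly twice, yielding $\sum_i (s_i - 2) t_i = 0$, which in the 1-run setting collapses to
\begin{equation*}
(s_1 - 2)\, t_1 + (s_2 - 2)\, t_2 = 0.
\end{equation*}
Since $t_1, t_2 > 0$, while each $s_k - 2$ is either $-1$ (if $s_k = 1$) or a positive odd integer (if $s_k$ is an odd prime), the two summands must have opposite signs. Hence exactly one of $s_1, s_2$ equals $1$ and the other is an odd prime $p \geq 3$.

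The two cases are mirror images, so I take $s_1 = 1$ and $s_2 = p$, which forces $t_1 = (p - 2)\, t_2$. Proposition~\ref{coprime} gives $\gcd(t_1, t_2) = 1$, and since $\gcd\bigl((p - 2) t_2,\, t_2\bigr) = t_2$, this requires $t_2 = 1$ and $t_1 = p - 2$. The cyclic signature equation at position~$2$ now reads $t_m + t_1 = s_2 t_2 = p$, yielding $t_m = 2$. The contradiction is immediate: the signature rule requires $s_2 = p \geq 3$ only when the sum $t_m + t_1$ is composite with smallest prime factor~$p$, yet $t_m + t_1 = p$ is prime and would therefore dictate $s_2 = 1$. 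The mirror case $s_2 = 1$, $s_1 = p$ is handled identically and ends with the prime sum $t_{m-1} + t_m = s_1 t_1 = p$ producing the same contradiction.

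The step I expect to be the most delicate is the postponed degenerate case in which some internal sum equals~$2$; this forces two consecutive $1$'s inside the run, and since the only even term is $t_m$, the pair must be $(t_{m-2}, t_{m-1}) = (1,1)$ with $t_m = 2$. Back-solving the preceding $s_i = 2$ equations forces every earlier odd term to equal~$1$ as well, collapsing the putative cycle to $(1, 1, \ldots, 1, 2)$. The closing equation $t_{m-1} + t_m = 3 = s_1 t_1$ then demands $s_1 = 3$, but the signature rule interprets the prime sum $3$ as $s_1 = 1$, giving the same flavor of contradiction.
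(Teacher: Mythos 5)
Your proof is correct and follows essentially the same route as the paper: both sum the $m$ signature equations so the system collapses to $2(t_1+t_2)=s_1t_1+s_2t_2$, i.e.\ $(s_1-2)t_1+(s_2-2)t_2=0$, and then rule out the resulting cases for $s_1,s_2$. The differences are only in the endgame and are cosmetic --- the paper closes by invoking the primality of the largest term (Corollary~\ref{largestterm}) together with $t_2>1$ to force $s_2=3$ and $t_1=t_2$, whereas you close via coprimality (Proposition~\ref{coprime}) and the observation that a prime sum cannot carry a signature value $p\ge 3$; your explicit handling of the degenerate sum-equals-$2$ case is a small point the paper leaves implicit.
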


\begin{proof}
Let $(t_1,t_2)$ be the node of the run. Sum all the row equations of Eq.\ \ref{eq:system} to get
\begin{equation*}
2(t_1 + \dotsb + t_m) = s_1 t_1 + \dotsb + s_m t_m.
\end{equation*}
By definition, $s_3,\dotsc,s_m = 2$, so the equation becomes $2(t_1+t_2) = s_1 t_1 + s_2 t_2$. Suppose first that $t_1$ is the largest prime with $s_1$ = 1. Then $t_1 = (s_2 - 2)t_2$. Since $t_2 > 1$ (dividing a composite number by its smallest prime factor will never produce 1) and $t_1$ is prime, $s_2=3$ and $t_1=t_2$, which is a contradiction since this is a non-trivial cycle. The argument is the same if $t_2$ is the largest prime.
\end{proof}

Since each run has at least 3 terms:

\begin{cor}
There are no non-trivial cycles of length below 6. If a cycle of length 6 exists, its shape must be $OOEOOE$.
\end{cor}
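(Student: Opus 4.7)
The plan is to combine two ingredients already established. The first is Theorem~\ref{onerun}, which rules out a non-trivial cycle consisting of a single run. The second is essentially the definition of a run: a run begins with a node (two odd terms) and ends with a single terminating even term, possibly with extra odd terms in between; thus every run has length at least 3, and the minimum length 3 is realized precisely by runs of shape OOE.

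With these in hand I would argue as follows. Any non-trivial cycle is a concatenation of, say, $k$ runs. By Theorem~\ref{onerun} we have $k \ge 2$, and since each run contributes at least 3 terms the cycle length is at least $2 \cdot 3 = 6$. This is the first assertion of the corollary.

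For the equality case, suppose a non-trivial cycle of length exactly 6 exists. Its run decomposition must satisfy $k \ge 2$ and have total length 6 with each summand at least 3, so $k = 2$ and both runs have length exactly 3. Each length-3 run must then have the unique shape OOE (two odd terms from the node, then the terminating even). Concatenating the two runs produces the cycle shape OOEOOE, which (as cycles have no distinguished starting point) is the unique shape up to rotation.

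I do not expect a real obstacle: the argument is just a short counting argument once the definitions are unpacked. The only subtlety worth flagging is that a run of length 3 truly admits no shape other than OOE; this is immediate because the node contributes two odd terms at the start and the run must terminate in a single even term.
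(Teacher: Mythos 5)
Your argument is correct and is essentially the paper's own: the corollary follows immediately from Theorem~\ref{onerun} (no one-run cycles) together with the fact that every run has at least $3$ terms, so a cycle needs at least two runs of length at least $3$ each, and the length-$6$ case forces two runs of shape $OOE$. The paper states this in one line ("Since each run has at least 3 terms\ldots"), and your write-up just makes the same counting explicit.
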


The trick of Theorem \ref{onerun} does not generalize to helping find cycles of more than one run. In this regard, we look to Theorem \ref{signaturetocycle} because it shows that results on signatures are necessarily results on cycles, which makes it desirable to relate signature terms within a cycle in a meaningful way. A signature is only useful if it produces a 1-dimensional solution space, requiring a determinant of 0.

One such relation could involve finding the general expression for the determinant of an $m$-cycle in terms of $s_1, \dotsc, s_m$, which we leave as an exercise to the reader. Our issue with this approach is that it ignores the run-based structure of cycles, and so we present a reduction where the only signature terms of interest are those corresponding to the node of each run (divisors not equal to 2).

\section{The run-centric system}

Constructing such a relation is powerful as it cements the correspondence between cycles and nodes, providing a more natural categorization of cycles. In Section \ref{sec:givenlen}, it lets us demonstrate an algorithm that disqualifies entire classes of cycle. This, combined with related signature restrictions in Section \ref{sec:sigres}, contributes to bounding future cycles by their lengths and shapes, as opposed to bounding the size of their terms. This is analogous to the two types of bound for the $3x+1$ problem: Simons and de Weger's lower bound on cycle lengths versus e Silva's lower bound on cycle term size, which restrict cycle classes and magnitudes respectively.

Before we begin, let us define the Jacobsthal numbers. They are defined by the recurrence $J_n=J_{n-1}+2J_{n-2}$, where $J_0 = 0$, $J_1 = 1$ (\href{http://oeis.org/A001045}{A001045} on OEIS\cite{OEIS}).  The next few are $J_2 = 1$, followed by 3, 5, 11, 21, \ldots. Solving the recurrence gives $J_n = \frac{1}{3}(2^{n} - (-1)^{n})$, and so apart from $J_0$ they are all odd. Using these numbers, we relate a run's terms with its node:

\begin{theorem}\label{runterms} Given a node $(a,b)$ where $a,b>0$ are odd, let $b = a + 2^{k-2}d$ with odd (but not necessarily positive) $d > -a/2^{k-2}$. The corresponding run is then $\{a+2^{k-i}J_{i-1}d\}$ where $i$ goes from 1 to $k$. The run has length $k$, consisting of $k-1 \ge 2$ odd terms followed by a single even term $a+J_{k-1}d$.
\end{theorem}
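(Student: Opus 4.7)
The plan is to introduce the candidate run $u_i := a + 2^{k-i}J_{i-1}d$ for $i=1,\ldots,k$ and verify in turn (a) agreement with $(a,b)$ at the node, (b) the halving recurrence that drives a run, (c) correct parities and positivity of all terms, and (d) that the subprime Fibonacci rule indeed divides each intermediate sum by exactly $2$ rather than by some larger prime.

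The base cases are immediate: $J_0=0$ gives $u_1=a$, and $J_1=1$ gives $u_2 = a + 2^{k-2}d = b$. For the ``add and halve'' step, I would compute directly
\[
u_{i-1}+u_i \;=\; 2a + 2^{k-i}\bigl(2J_{i-2}+J_{i-1}\bigr)d \;=\; 2a + 2^{k-i}J_i\,d,
\]
using the Jacobsthal identity $J_i = J_{i-1}+2J_{i-2}$; halving yields $u_{i+1}$, confirming the candidate sequence satisfies the correct recurrence for $2\le i \le k-1$.

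For parity, reducing the Jacobsthal recurrence modulo $2$ gives $J_n\equiv J_{n-1}\pmod 2$, so $J_n$ is odd for every $n\ge 1$. Hence for $i\ge 2$ the summand $2^{k-i}J_{i-1}d$ has $2$-adic valuation exactly $k-i$, making $u_i$ odd for $i\le k-1$ and forcing $u_k = a + J_{k-1}d$ to be even (odd plus odd). For positivity, the elementary bound $J_n \le 2^{n-1}$ (either by induction from the recurrence, or from the closed form $J_n = (2^n - (-1)^n)/3$), combined with the hypothesis $|d|<a/2^{k-2}$, gives $|2^{k-i}J_{i-1}d| \le 2^{k-2}|d| < a$, so every $u_i$ is a positive integer.

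Putting these pieces together closes the proof: each intermediate sum $u_{i-1}+u_i$ (for $2\le i \le k-1$) is an even positive integer at least $4$, hence composite with smallest prime factor $2$, so the subprime rule divides it by $2$ and produces exactly $u_{i+1}$; and the parity analysis shows the run consists of $k-1$ odd terms $u_1,\ldots,u_{k-1}$ followed by the lone even terminator $u_k = a+J_{k-1}d$. I expect the main obstacle to be purely the bookkeeping: synchronizing the exponent $2^{k-i}$ with the Jacobsthal index $i-1$ so that the algebraic recurrence and the arithmetic recurrence fit together cleanly, and confirming via the $J_n\le 2^{n-1}$ bound that the single hypothesis $d>-a/2^{k-2}$ (which ensures $b>0$) in fact suffices to keep every intermediate $u_i$ positive throughout the whole run.
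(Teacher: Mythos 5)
Your proof is correct and follows essentially the same route as the paper's: check the node values $u_1=a$, $u_2=b$, use the Jacobsthal recurrence $J_i=J_{i-1}+2J_{i-2}$ to show each candidate term is the average of the two preceding ones, read off parities from the oddness of $J_n$ for $n\ge 1$, and obtain positivity from the condition on $d$. You are in fact somewhat more explicit than the paper (which does not spell out that each even intermediate sum has smallest prime factor $2$); the only nitpick is that you quote the hypothesis as $|d|<a/2^{k-2}$ when it is the one-sided $d>-a/2^{k-2}$, but since positivity is trivial for $d>0$ your argument is unaffected.
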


\begin{proof}
We justify the exponent $k-2$ in $b = a + 2^{k-2}d$ as it counts the number of divisions by 2, which occur for all terms but the two node terms. Thus $k$ denotes run length. We conclude that the first $k-1$ members
\begin{equation}\label{eq:run}
a+2^{k-1}J_0d=a, \quad a+2^{k-2}J_1d=a+2^{k-2}d, \quad \ldots, \quad \ a+2^1J_{k-2}d
\end{equation}
are all odd since $a$ and $d$ are odd by definition, while the last ($k$-th) term $a+2^0J_{k-1}d$ is even.  By the recurrence, each term
after the first two is the average of the two previous ones, a consequence of the definition of a subprime Fibonacci sequence. The condition $d>-a/2^{k-2}$ ensures that
all our terms are positive.
\end{proof}

We refer to Theorem \ref{runterms} for a more run-based system for a cycle. Write out two runs in the style of Eq.\ \ref{eq:run}:
\begin{eqnarray*}
a_1, \ a_1+2^{k_1-2}d_1, \ a_1+2^{k_1-3}d_1, \ \dots ,
 \ a_1+2J_{k_1-2}d_1, \ a_1+J_{k_1-1} d_1 \\
a_2, \ a_2+2^{k_2-2}d_2, \ a_2+2^{k_2-3}d_2, \ \dots ,
 \ a_2+2J_{k_2-2}d_2, \ a_2+J_{k_2-1} d_2
\end{eqnarray*}

Concatenate the two runs. The two terms after the first run will be the first two terms (the node) of the second run. Remembering that $J_n=J_{n-1}+2J_{n-2}$, we can express these terms as
\begin{equation*}
\frac{2a_1+J_{k_1}d_1}{p_2} \mbox{\quad and \quad}
\frac{(p_2+2)a_1+(J_{k_1-1}p_2+J_{k_1})d_1}{p_2q_2}
\end{equation*}
respectively, where $p_2$, $q_2$ are the least prime \textbf{divisors} of the node of the second run. We will reserve the use of `divisors' to the signature terms of nodes.

Curve the two runs into a cycle and denote the divisors of the first run as $p_1$ and $q_1$. As with Eq.\ \ref{eq:system} we fix the length $m$ of the cycle, which is done by fixing the individual run lengths $k_1, k_2$. We now have the four equations
\begin{eqnarray*}
p_2a_2 & = & 2a_1+J_{k_1}d_1 \\
p_2q_2(a_2+2^{k_1-2}d_2) & = & (p_2+2)a_1+(J_{k_1-1}p_2+J_{k_1})d_1 \\
p_1a_1 & = & 2a_2+J_{k_2}d_2 \\
p_1q_1(a_1+2^{k_2-2}d_1) & = & (p_1+2)a_2+(J_{k_2-1}p_1+J_{k_2})d_2
\end{eqnarray*}
giving four linear homogeneous equations as viewed in terms of $a_1$, $d_1$, $a_2$, $d_2$. Subtracting the first and third equations from the second and fourth then removing a factor $p_i$ from each gives
\begin{equation}\label{eq:2rundet}
\begin{vmatrix} 2 & J_{k_1} & -p_2 & 0 \\
1 & J_{k_1-1} & -q_2+1 & -2^{k_2-2}q_2 \\
-p_1 & 0 & 2 & J_{k_2} \\
-q_1+1 & -2^{k_1-2}q_1 & 1 & J_{k_2-1} 
\end{vmatrix}
= 0.
\end{equation}

Expanding and applying the identity $2^{k-1} - J_k = J_{k-1}$ reduces Eq.\ \ref{eq:2rundet} to:
\begin{align*}
2^{k_1+k_2-4}p_1q_1p_2q_2 \quad=\quad & J_{k_1-1}J_{k_2-1}p_1p_2+
J_{k_1}J_{k_2-2}p_1q_2+J_{k_1-2}J_{k_2}q_1p_2+J_{k_1-1}J_{k_2-1}q_1q_2\\
& + J_{k_1}J_{k_2-1}p_1+J_{k_1-1}J_{k_2}q_1+J_{k_1-1}J_{k_2}p_2+ J_{k_1}J_{k_2-1}q_2\\
& +J_{k_1}J_{k_2}-(-1)^{k_1+k_2-4}.
\end{align*}
Note that \emph{any} 2-run cycle's node divisors must satisfy this relation.

This method generalizes to $n$ runs, again where we `curve' the $n$ runs into a cycle so that the $n$-th run gives rise to the 1st run. This gives $2n$ equations in $2n$ unknowns. In general, where $k_i$ is the length of the $i$-th run, $p_i$ and $q_i$ being the divisors of the $i$-th node, and letting $p_0$ refer to $p_n$, the equation for $n$-run cycles may be written as
\begin{equation}\label{eq:nrun}
\prod_{i=1}^n 2^{\sum(k_i-2)}p_{i-1}q_i=
\sum_{\substack{\delta_1, \dotsc, \delta_{2n} \in \{0,1\}, \\ \sum \delta_j \le n, \\ \forall j \le n: \; \delta_j + \delta_{j+n} < 2}}
\prod_{i=1}^nJ_{k_i-\delta_i-\delta_{i+n}}
p_{i-1}^{\delta_i}q_i^{\delta_{i+n}}-(-1)^{\sum(k_i-2)}
\end{equation}
for $n \ge 2$. 

The input for this formula only requires the number of runs in the cycle and the associated \textbf{run configuration}, which is the $n$-tuple of run lengths and thus a concise version of the shape. For example, the run configuration of the 10-cycle of shape $OOEOOEOOOE$ is $(k_1,k_2,k_3) = (3,3,4)$, where $m = \sum k_i = 10$. It is important to note that as with the shapes of cycles, run configurations $(3,4,3)$ and $(4,3,3)$ are identical to $(3,3,4)$ since cycles have no definitive starting nodes; what matters is that the order of run lengths is preserved.

\section{Signature restrictions}
\label{sec:sigres}

Let the terms of an arbitrary cycle be $a_1, b_1, \frac{a_1+b_1}{2}, \frac{a_1+3b_1}{4}, \dotsc$, $a_n, b_n, \frac{a_n+b_n}{2}, \frac{a_n+3b_n}{4}, \dotsc$ where $(a_i,b_i)$ are the cycle's nodes. Let the respective signature be $p_1,q_1,2,2,\dotsc$, $p_n,q_n,2,2,\dotsc$ etc., where $p_i$ corresponds to $a_i$ and $q_i$ corresponds to $b_i$. Each $p_i$, $q_i$ is either 1 or an odd prime; we will refer to these, the divisors of the cycle's nodes, collectively as \textbf{the cycle's divisors}. As already established, $n \ge 2$.

We were able to disprove 1-run cycles and relate the divisors of $n$-run cycles to one another given the run configuration. But even with a given 2-run configuration, we are left with a relation in 4 unknown variables $p_1,q_1,p_2,q_2$ (remember that a cycle is uniquely determined by its signature, from which every $(a_i, b_i)$ can be recovered). Can we further restrict these variables? We already know that at least one of $p_1,q_1,\dotsc,p_n,q_n$ is 1, and that $p_1,q_1,\dotsc,p_n,q_n$ cannot all equal 1 since cycle terms cannot increase indefinitely.

Can we strengthen these results? To motivate another approach, consider the following diagram of term vs. index for the 18-cycle (Figure~\ref{fig:18cycle}), which is composed of four runs:

\begin{figure}[H]
\centering

\begin{tikzpicture}[xscale=0.8,yscale=0.6]

\draw [<->, very thick] (0,14) -- (0,0.1) -- (19.5,0.1);

\fill [black!10] (0.5, 1.3) rectangle (6.5, 6.1);
\fill [black!10] (6.5, 4.5) rectangle (10.5, 8.9);
\fill [black!10] (10.5, 4.1) rectangle (15.5, 9.7);
\fill [black!10] (15.5, 5.3) rectangle (18.5, 4.3);

\tikzstyle{seq} = [shape=circle,fill=black,inner sep=0.5mm]

\path
node (a) at (0, 4.8) [seq] {}
node (b) at (1, 1.3) [seq] {}
node (c) at (2, 6.1) [seq] {}
node (d) at (3, 3.7) [seq] {}
node (e) at (4, 4.9) [seq] {}
node (f) at (5, 4.3) [seq] {}
node (g) at (6, 4.6) [seq] {}
node (h) at (7, 8.9) [seq] {}
node (i) at (8, 4.5) [seq] {}
node (j) at (9, 6.7) [seq] {}
node (k) at (10, 5.6) [seq] {}
node (l) at (11, 4.1) [seq] {}
node (m) at (12, 9.7) [seq] {}
node (n) at (13, 6.9) [seq] {}
node (o) at (14, 8.3) [seq] {}
node (p) at (15, 7.6) [seq] {}
node (q) at (16, 5.3) [seq] {}
node (r) at (17, 4.3) [seq] {}
node (s) at (18, 4.8) [seq] {}
node (t) at (19, 1.3) [seq] {};

\draw[black] (a) -- (b) -- (c) -- (d) -- (e) -- (f) -- (g) -- (h) -- (i) -- (j) -- (k) -- (l) -- (m) -- (n) -- (o) -- (p) -- (q) -- (r) -- (s) -- (t);

\tikzstyle{every node} = [node distance=4mm]

\path
node [left of=a] {48}
node [below of=b] {13}
node [above of=c] {61}
node [below of=d] {37}
node [above of=e] {49}
node [below of=f] {43}
node [below of=g] {46}
node [above of=h] {89}
node [below of=i] {45}
node [above of=j] {67}
node [above of=k] {56}
node [below of=l] {41}
node [above of=m] {97}
node [below of=n] {69}
node [above of=o] {83}
node [above of=p] {76}
node [below of=q] {53}
node [below of=r] {43}
node [above of=s] {48}
node [right of=t] {13};

\draw[snake=brace, segment amplitude=3mm, thick] (0.5,12.5) -- (18.5, 12.5);
\node at (9.5, 13.5) {cycle};
\draw[snake=brace, segment amplitude=3mm, thick] (0.5,11) -- (6.5,11);
\node at (3.5, 12) {$OOOOOE$};
\draw[snake=brace, segment amplitude=3mm, thick] (6.5,11) -- (10.5,11);
\node at (8.5, 12) {$OOOE$};
\draw[snake=brace, segment amplitude=3mm, thick] (10.5,11) -- (15.5, 11);
\node at (13, 12) {$OOOOE$};
\draw[snake=brace, segment amplitude=3mm, thick] (15.5,11) -- (18.5, 11);
\node at (17, 12) {$OOE$};

\draw[dashed] (0.5, 11) -- (0.5, 0.1);
\draw[dashed] (6.5, 11) -- (6.5, 0.1);
\draw[dashed] (10.5, 11) -- (10.5, 0.1);
\draw[dashed] (15.5, 11) -- (15.5, 0.1);
\draw[dashed] (18.5, 11) -- (18.5, 0.1);

\end{tikzpicture}

\caption{18-cycle with runs labeled and local run bounds shaded}

\label{fig:18cycle}

\end{figure}
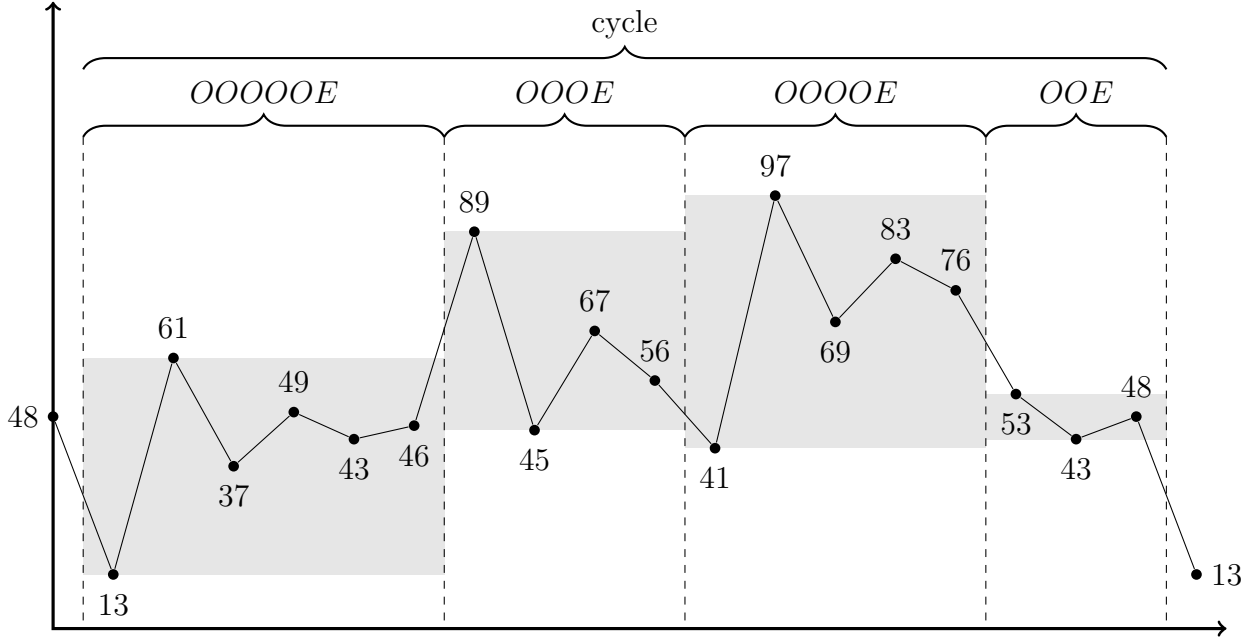

The shaded areas represent bounds on each run and are a consequence of Corollary~\ref{runbounds}. Remembering Corollary~\ref{largestterm} and that if some $p_i$ or $q_i \ne 1$ then the sum of the two terms preceding the corresponding $a_i$ or $b_i$ was divided by at least 3, we provide three stronger results:

\begin{prop}\label{twoarenot1}
At least two of $p_1,q_1,\dotsc,p_n,q_n$ do not equal 1.
\end{prop}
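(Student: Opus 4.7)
I would argue by contradiction. The preceding discussion already establishes that at least one of $p_1, q_1, \ldots, p_n, q_n$ must equal $1$ and at least one must differ from $1$, so the remaining case to kill is that \emph{exactly one} of them is a non-$1$ divisor, an odd prime $\ell \ge 3$ at some node term $T$.

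My first move is the standard sum identity: cyclically summing the recurrences $s_k t_k = t_{k-1}+t_{k-2}$ gives $\sum_k (s_k-2)t_k = 0$. Non-node positions ($s_k=2$) contribute nothing; each of the $2n-1$ sig-$1$ nodes contributes $-t_k$; the lone sig-$\ell$ node contributes $(\ell-2)T$. Rearranging yields $(\ell-1)T = \sum_i(a_i+b_i)$. This is one clean linear constraint; to produce the contradiction I'd combine it with equation (\ref{eq:nrun}).

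The polynomial RHS of (\ref{eq:nrun}) is multilinear in the $p_i, q_i$ (each appears with degree $0$ or $1$ in the subset expansion), so setting all divisors to $1$ except the distinguished one reduces (\ref{eq:nrun}) to a linear equation in $\ell$ of the form $\ell(2^{m-2n}-A)=B$, where $A$ collects the RHS terms containing the distinguished variable (others set to $1$) and $B$ collects the rest. A contradiction will follow once both $A > 2^{m-2n}$ and $B > 0$ are established, since the LHS will then be strictly negative while the RHS is strictly positive. The bound $B > 0$ is easy: $B$ contains the absolute constant $\prod_i J_{k_i} - (-1)^{m-2n} \ge 3^n - 1 > 0$ (using $k_i \ge 3 \Rightarrow J_{k_i} \ge J_3 = 3$, and $n \ge 2$), plus further nonnegative Jacobsthal products.

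The heart of the argument is the bound $A > 2^{m-2n}$. In the 2-run case with the distinguished variable $p_1$, the three Jacobsthal products making up $A$ telescope by the identities $J_k - 2^{k-2} = J_{k-2}$ and $J_{k-1}+J_{k-2} = 2^{k-2}$ into
\[
A = J_{k_1-1}J_{k_2-1} + J_{k_1}J_{k_2-2} + J_{k_1}J_{k_2-1} = 2^{m-2n} + J_{k_1-1}J_{k_2-1} + J_{k_1-2}\cdot 2^{k_2-2},
\]
which strictly exceeds $2^{m-2n}$; the $q_1$-case telescopes identically by the same identities. For general $n$, the same Jacobsthal relations applied to those subset-indexed terms of (\ref{eq:nrun}) involving the distinguished variable should extract exactly one copy of $2^{m-2n}$ and leave a strictly positive Jacobsthal-product remainder. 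The main obstacle will be the general-$n$ bookkeeping: the 2-run template is transparent, but the $\delta$-subsets of (\ref{eq:nrun}) proliferate with $n$, and packaging the telescoping carefully for an arbitrary number of runs (and for both the $p$ and $q$ positions of the distinguished variable) is where the real work lies.
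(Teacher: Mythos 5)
Your strategy is genuinely different from the paper's and, as far as you carry it out, the computations check: for two runs the coefficient $A$ of the lone non-unit divisor $\ell$ in the reduced form of (\ref{eq:2rundet}) does telescope, via $J_{k-1}+J_{k-2}=2^{k-2}$ and $J_k=2^{k-2}+J_{k-2}$, to $2^{k_1+k_2-4}+J_{k_1-1}J_{k_2-1}+J_{k_1-2}2^{k_2-2}$, which exceeds $2^{m-2n}$ since $k_i\ge 3$ makes both remainder terms positive; the symmetric computation for a distinguished $q_1$ works the same way, and $B>0$ is immediate from the positivity of all Jacobsthal factors appearing in (\ref{eq:nrun}). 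But the proposition is a statement about cycles with any number $n\ge 2$ of runs, and your argument for general $n$ consists of the sentence that the ``same Jacobsthal relations \ldots should extract exactly one copy of $2^{m-2n}$'' — you yourself flag this as where the real work lies, and it is not done. The crux inequality $A>2^{m-2n}$ for arbitrary $n$ is therefore asserted, not proven, and it is not a routine corollary of the $n=2$ case: the subset-indexed terms of (\ref{eq:nrun}) containing the distinguished variable number exponentially many in $n$, the telescoping must be organized across all of them simultaneously, and a single term $\prod_i J_{k_i-\cdot}$ is on the order of $2^{m}/3^{n}$, which by itself does not dominate $2^{m-2n}$; so the conclusion genuinely depends on summing the whole family. (The opening ``sum identity'' $(\ell-1)T=\sum_i(a_i+b_i)$ is also never used; it can be dropped.)

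For contrast, the paper's proof avoids the determinant machinery entirely. It assumes all divisors but one (say $p_2$ or $q_2$) equal $1$ and tracks lower bounds on the cycle terms directly: with $q_1=1$ the term $b_1$ exceeds $a_1$, averaging within a run preserves the bound, a single division by $p_2$ (or $q_2$) still leaves the subsequent node terms large enough that every term preceding $a_1$ is $>\tfrac12 a_1$, and then $p_1=1$ forces $a_1>a_1$. That argument is three lines, uniform in $n$, and needs nothing beyond positivity of the terms. If you want to salvage your route, you must either complete the general-$n$ telescoping of $A$ or restrict your claim to $n=2$ — as it stands the proof covers only the two-run case.
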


\begin{proof}
Without loss of generality, let all but $p_2$ or $q_2$ be 1.

\textbf{Case 1.} $p_2 \ne 1$. Then $b_1$ and the rest of the terms in the first run are all greater than $a_1$, and since $a_2 > 0$, we have $b_2 > a_1$. Since all other $p_i,q_i = 1$, all terms before $a_1$ are greater than $\frac{1}{2}a_1$. Then $a_1 > a_1$, a contradiction.

\textbf{Case 2.} $q_2 \ne 1$. Then $b_1$ and the rest of the terms in the first run are all greater than $a_1$, and since $a_2 > 2a_1$, we have $b_2 > 0$ and $\frac{a_2 + b_2}{2} > a_1$. Since all other $p_i,q_i = 1$, all terms before $a_1$ are greater than $\frac{1}{2}a_1$. Then $a_1 > a_1$, a contradiction.
\end{proof}

\begin{prop}\label{twoare1}
At least two of $p_1,q_1,\dotsc,p_n,q_n$ equal 1.
\end{prop}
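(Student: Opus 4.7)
By Corollary~\ref{largestterm} the maximum term $M$ of a non-trivial cycle is a prime with signature value $1$, so at least one of $p_1,q_1,\dotsc,p_n,q_n$ equals $1$. My plan is to rule out the case of \emph{exactly} one such value being $1$ by contradiction. Up to the symmetry between the two node positions, I may take the exceptional value to be $p_{i_0}=1$ with $M=a_{i_0}$; the other case ($q_{i_0}=1$, $M=b_{i_0}$) is analogous with the constant $2$ below replaced by $5/3$. All other $p_i,q_i$ are then odd primes, hence $\geq 3$.

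First I would establish a decay estimate: if both $p_j,q_j\geq 3$ then $\mu_j:=\max(a_j,b_j)\leq \tfrac{2}{3}\mu_{j-1}$. Writing $L_{j-1}$ for the sum of the last two terms of run $j-1$ and $e_{j-1}$ for its terminal (even) term, Corollary~\ref{runbounds} gives $L_{j-1}\leq 2\mu_{j-1}$ and $e_{j-1}\leq \mu_{j-1}$, so
\[a_j=L_{j-1}/p_j\leq \tfrac{2}{3}\mu_{j-1},\qquad b_j=(a_j+e_{j-1})/q_j\leq \tfrac{5}{9}\mu_{j-1}.\]
Since all $p_j,q_j$ except $p_{i_0}$ are $\geq 3$, iterating through the $n-1$ non-exceptional node transitions gives $\mu_{i_0-1}\leq (2/3)^{n-1}M$. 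At the exceptional transition, $M=a_{i_0}=L_{i_0-1}\leq 2\mu_{i_0-1}$, whence $M\leq 2(2/3)^{n-1}M$, forcing $(3/2)^{n-1}\leq 2$. Combined with Theorem~\ref{onerun} (which rules out $n=1$), this leaves $n=2$.

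The main obstacle is the remaining $n=2$ case, where the crude decay is only marginally sharp ($2\cdot 2/3=4/3\geq 1$). To close it I would invoke the explicit run formula from Theorem~\ref{runterms}: a run with node $(a,b)$ and length $k$ has
\[L=(2-\alpha_k)a+\alpha_k b,\qquad e=(1-\beta_k)a+\beta_k b,\]
where $\alpha_k=J_k/2^{k-2}$ and $\beta_k=J_{k-1}/2^{k-2}$. Direct computation from $J_n=(2^n-(-1)^n)/3$ gives $\alpha_k\geq 5/4$ and $\beta_k\geq 1/2$ for all $k\geq 3$. Propagating these refined affine bounds through the two nodes tightens the estimate on $\mu_2$: in Case A, a single pass yields $\mu_2\leq 13M/27$, whence $M=L_2\leq 2\mu_2\leq 26M/27<M$, a contradiction. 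Case B is tighter---the lower bound $\mu_2\geq 3M/5$ coming from $M=a_1+e_2\leq \tfrac{5}{3}\mu_2$ is close to the first refined upper bound $\mu_2\leq 11M/18$---so a bootstrap iteration of the refined estimates is needed to push $\mu_2$ strictly below $3M/5$ and reach the contradiction.
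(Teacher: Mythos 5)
Your overall strategy is the same as the paper's: use Corollary~\ref{largestterm} to place the maximum term $M$ at the node position whose divisor is $1$, propagate upper bounds (as fractions of $M$) around the cycle, and force $M<M$. Your reduction of $n\ge 3$ via the geometric decay $\mu_j\le \tfrac{2}{3}\mu_{j-1}$ is correct and is a clean way to isolate the hard case. But two things need attention. First, in Case A the intermediate claim $\mu_2\le \tfrac{13}{27}M$ does not follow from the bounds you state: a single pass gives $b_1\le\tfrac{2}{3}M$, hence $L_1\le \tfrac{3}{4}M+\tfrac{5}{4}\cdot\tfrac{2}{3}M=\tfrac{19}{12}M$ and $a_2\le\tfrac{19}{36}M=\tfrac{57}{108}M$, which exceeds $\tfrac{13}{27}M=\tfrac{52}{108}M$. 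The conclusion $L_2\le\tfrac{26}{27}M$ is still reachable, but only by applying the affine formula $L_2=(2-\alpha_{k_2})a_2+\alpha_{k_2}b_2$ directly to the separate bounds $a_2\le\tfrac{57}{108}M$, $b_2\le\tfrac{49}{108}M$ (maximizing over $\alpha_{k_2}\in[\tfrac54,\tfrac32]$ gives exactly $\tfrac{416}{432}M=\tfrac{26}{27}M$), not via the crude $L_2\le 2\mu_2$. Second, and more seriously, Case B of $n=2$ --- which you correctly identify as the delicate one, with margins $\tfrac35$ versus $\tfrac{11}{18}$ --- is only asserted to need ``a bootstrap iteration''; you do not carry it out or show that the iteration actually crosses the threshold. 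Since this is precisely where the content of the proposition lies, the argument as written is incomplete.

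For comparison, the paper's proof avoids Theorem~\ref{runterms} entirely and treats all $n\ge2$ and both cases uniformly: it runs the crude averaging bounds (each post-node term lies between $b$ and $\tfrac{a+b}{2}$, each new node term picks up a factor $\le\tfrac13$) once around the cycle, feeds the resulting bound on the terminal even term back into $b_1$, and a second pass already yields a strict contradiction (e.g.\ all terms before $a_1$ end up below $\tfrac{77}{162}a_1<\tfrac12 a_1$ in its Case 1, and $b_1<\tfrac{385}{432}b_1$ in its Case 2). That second pass is exactly the bootstrap you defer, and it does converge, so your plan is fillable --- but you should either execute the iteration explicitly for Case B or adopt the simpler uniform two-pass argument, which renders the Jacobsthal refinement and the $n\ge3$/$n=2$ split unnecessary.
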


\begin{proof}
Without loss of generality, let exactly one of $p_1,q_1$ be $1$ (the rest are $\ge 3$). Then the corresponding term $a_1$ or $b_1$ is prime and the largest term of the cycle.

\textbf{Case 1.} $p_1=1$. Then $b_1 < \frac{2}{3} a_1$ and the rest of the terms in the run are less than $\frac{5}{6} a_1$. Then $a_2 < \frac{5}{9} a_1$, $b_2 < \frac{25}{54} a_1$, and the rest of the terms of the second run are less than $\frac{55}{108} a_1$. Since all further $p_i$,$q_i$ (if any) are also greater than $3$, they do not increase the maximum. We iterate our bounding to get $b_1 < \frac{163}{324} a_1 < \frac{5}{9} a_1$ and that the rest of terms of the run are less than $\frac{7}{9} a_1$. Then $a_2 < \frac{14}{27} a_1$, $b_2 < \frac{35}{81} a_1$, and the rest of the terms before $a_1$ are less than $\frac{77}{162} a_1 < \frac{1}{2}a_1$. Then $a_1 < a_1$, a contradiction.

\textbf{Case 2.} $q_1=1$. The first run's terms are less than $b_1$ except for $b_1$ itself. Then $a_2 < \frac{2}{3} b_1$, $b_2 < \frac{5}{9} b_1$, and the rest of the terms of the second run are less than $\frac{11}{18} b_1$. Since all further $p_i$,$q_i$ (if any) are also greater than $3$, they do not increase the maximum. Then $a_1 < \frac{11}{27} b_1 < \frac{1}{2} b_1$.  We iterate our bounding to see that $\frac{a_1+b_1}{2} < \frac{3}{4} b_1$ and that the rest of the terms of the run are less than $\frac{7}{8} b_1$. Then $a_2 < \frac{7}{12} b_1$, $b_2 < \frac{35}{72} b_1$, and the rest of the terms before $a_1$ are less than $\frac{77}{144} b_1$. Then $a_1 < \frac{77}{216} b_1$ and $b_1 < \frac{385}{432} b_1 < b_1$, a contradiction.
\end{proof}

\begin{prop}\label{twoare1ifsep}
If there are only two $p_1,q_1,\dotsc,p_n,q_n$ that equal 1, the two cannot be of the form $p_i,q_i$ unless $(p_{i+1},q_{i+1}) = (3,3)$, $(3,5)$, or $(5,3)$.
\end{prop}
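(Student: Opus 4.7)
Without loss of generality, take $i=1$, so $(p_1,q_1)=(1,1)$ and all other $p_j,q_j\ge 3$. If $o_0$ and $e_0$ are the final odd and even terms of run $n$, then $(p_1,q_1)=(1,1)$ gives $a_1=o_0+e_0$ and $b_1=e_0+a_1$, whence $b_1>a_1$ and, from $o_0=2a_1-b_1>0$, $a_1>b_1/2$. For $j=1,\dots,n-1$ the contraction $M_{j+1}\le\max\bigl(2/p_{j+1},\,(p_{j+1}+2)/(p_{j+1}q_{j+1})\bigr)M_j\le\tfrac{2}{3}M_j$ holds, so $M=b_1$ is the maximum of the whole cycle and $M_n\le(2/3)^{n-1}b_1$. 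Combined with the closure bound $a_1=t^{(n)}_{k_n-1}+t^{(n)}_{k_n}\le 2M_n$ and $a_1>b_1/2$, this forces $(2/3)^{n-1}>1/4$, hence $n\le 4$.

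I would then split on $(p_2,q_2)$ outside the exempted set into three families: (A) $p_2=3,\,q_2\ge 7$; (B) $p_2,q_2\ge 5$; (C) $p_2\ge 7,\,q_2=3$. The sharper first-run contractions $M_2/b_1\le 2/3,\,2/5,\,3/7$ respectively feed back into the geometric chain to give $n\le 4,\,3,\,3$. For the residual $n\ge 3$ subcases I would iterate as in Proposition~\ref{twoare1}'s Case~2: replace the crude $t^{(1)}_{k_1-1}+t^{(1)}_{k_1}\le 2b_1$ by the exact $(2-\alpha_1)a_1+\alpha_1 b_1$ from Theorem~\ref{runterms}, with $\alpha_1=J_{k_1}/2^{k_1-2}\in[5/4,3/2]$, and use $a_1<b_1$ to propagate tighter bounds on $a_2,b_2,\dots,a_n,b_n$; the sharpened contraction pushes $a_1$ below $b_1/2$, giving the required contradiction.

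The serious obstacle is $n=2$, where no further iteration is available and the cycle is pinned down by $(k_1,k_2)$. My plan is to unroll the last run backwards from the terminal values $t^{(2)}_{k_2}=b_1-a_1$ and $t^{(2)}_{k_2-1}=2a_1-b_1$ via $t_{i-2}=2t_i-t_{i-1}$, obtaining $t^{(2)}_{k_2-j}=(-1)^{j+1}(2^j a_1-J_{j+1}b_1)$, and to combine this with the forward expressions $p_2 a_2=(2-\alpha_1)a_1+\alpha_1 b_1$ and the analogous one for $b_2$ to solve (\ref{eq:2rundet}) explicitly for the forced rational ratio $a_1/b_1$ as a function of $(k_1,k_2,p_2,q_2)$. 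Positivity of every $t^{(2)}_{k_2-j}$ confines $a_1/b_1$ to the Jacobsthal window $(1/2,3/4)\cap(5/8,11/16)\cap(21/32,43/64)\cap\dots$ whose intersection tightens to $2/3$ as $k_2$ grows, and in every disallowed pair I expect the forced ratio to fall outside this window. A handful of exceptional $(k_1,k_2)$ will land inside—e.g.\ $(p_2,q_2)=(3,7)$ at $(k_1,k_2)=(3,3)$ forces $a_1/b_1=3/5$—and these I would dispatch by combining primality of $a_1$ from $p_1=1$ (Corollary~\ref{largestterm}) with $\gcd(a_1,b_1)=1$ (Proposition~\ref{coprime}) to rule out any integer scaling. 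This final primality check is the most delicate piece.
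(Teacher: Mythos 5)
Your opening is sound and even contributes something the paper does not: from $p_1=q_1=1$ you correctly get $b_1$ as the cycle maximum with $b_1/2<a_1<b_1$, and the contraction $M_{j+1}\le\max\bigl(2/p_{j+1},(p_{j+1}+2)/(p_{j+1}q_{j+1})\bigr)M_j\le\tfrac23 M_j$ together with the closure bound $a_1\le 2M_n$ does force $n\le 4$. But that bound holds for \emph{every} choice of $(p_2,q_2)\ge 3$ and therefore proves nothing about the proposition itself, whose entire content is that $(p_2,q_2)$ must be $(3,3)$, $(3,5)$ or $(5,3)$. Everything after your first paragraph is a plan rather than an argument: the residual $n\ge 3$ subcases are dispatched with ``the sharpened contraction pushes $a_1$ below $b_1/2$'' without the computation, and this is least plausible exactly in your family (A), where the contraction factor is $2/p_2=2/3$ coming from $a_2$ alone and does not improve as $q_2$ grows. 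The $n=2$ case is worse: solving for the ratio $a_1/b_1$ from the exact Jacobsthal expressions and testing it against a positivity window must be done for infinitely many triples $(k_1,k_2,q_2)$ (and $(k_1,k_2,p_2)$ in family (C)), you concede the outcome only as ``I expect,'' and the final primality/coprimality step for the exceptional configurations is left open. As written this is a genuine gap, not a finishable outline.

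The paper avoids all of this stratification by closing the loop in a single pass that keeps $(p_2,q_2)$ symbolic. From $b_1<2a_1$ it bounds the whole first run by $\tfrac74 a_1$ (via $\tfrac{a_1+b_1}{2}<\tfrac32 a_1$ and $\tfrac{a_1+3b_1}{4}<\tfrac74 a_1$, then Corollary~\ref{runbounds}), hence $a_2<\tfrac{7}{4p_2}a_1$ and $b_2<\tfrac{7(p_2+1)}{4p_2q_2}a_1$; since every later divisor is $\ge 3$, no subsequent term before $a_1$ exceeds $\max\bigl(\tfrac{7}{4p_2}a_1,\tfrac{7(p_2+1)}{4p_2q_2}a_1\bigr)$, and wrapping around with $a_1<2\max(\cdots)$ yields an inequality in $p_2,q_2$ alone whose failure set is precisely the three exempted pairs. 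The lesson is that the quantity to contract is not the number of runs but the maximum itself, carried once around the cycle with $(p_2,q_2)$ as parameters; if you want to salvage your draft, replace the case split on $n$ and on $(k_1,k_2)$ by this single parametric inequality.
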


\begin{proof}
Without loss of generality, let $p_1,q_1 = 1$ and all further $p_i,q_i \ge 3$. Then $b_1$ is the largest term, with $b_1 < 2a_1$, $\frac{a_1+b_1}{2} < \frac{3}{2} a_1$, and the rest of the run is $< \frac{7}{4} a_1$. Then $a_2 < \frac{7}{4p_2} a_1$, $b_2 < \frac{7(p_2+1)}{4 p_2 q_2} a_1$. Since all further $p_i,q_i$ (if any) are $\ge 3$ then all subsequent terms before $a_1$ are $< \max \left(\frac{7}{4p_2} a_1, \frac{7(p_2+1)}{4 p_2 q_2} a_1 \right)$. Hence $a_1 < \max \left(\frac{7}{2p_2} a_1, \frac{7(r+1)}{2p_2q_2} a_1 \right)$, a contradiction unless $(p_2,q_2) = (3,3)$, $(3,5)$, or $(5,3)$.
\end{proof}

These results are particularly restrictive on cycles of only two nodes, that is, where the only divisors are $p_1,q_1,p_2,q_2$. One might conjecture that

\begin{conj}\label{tworuns}
There are no non-trivial cycles of two runs (i.e., two even terms).
\end{conj}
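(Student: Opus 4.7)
\emph{Plan.} The plan is to combine the divisor restrictions of Propositions~\ref{twoarenot1}--\ref{twoare1ifsep} with the $n=2$ instance of equation~(\ref{eq:nrun}), show the resulting equation has a non-vanishing asymptotic leading term, and then force a bounded residual check.

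By Propositions~\ref{twoarenot1} and~\ref{twoare1}, exactly two of $p_1,q_1,p_2,q_2$ equal $1$ and the other two are odd primes $\ge 3$. Up to the cyclic symmetry swapping the two runs, the placement of the $1$s falls into four configurations: Case~A ($p_1=q_1=1$), Case~B1 ($p_1=p_2=1$), Case~B2 ($p_1=q_2=1$), and Case~B4 ($q_1=q_2=1$). Proposition~\ref{twoare1ifsep} immediately restricts Case~A to $(p_2,q_2)\in\{(3,3),(3,5),(5,3)\}$. Substituting each surviving configuration into (\ref{eq:nrun}) with $n=2$ and expanding via $J_n=(2^n-(-1)^n)/3$, the equation $\mathrm{LHS}=\mathrm{RHS}$ takes the form
\begin{equation*}
\mathrm{RHS}-\mathrm{LHS}\;=\;\tfrac{1}{144}\,F(\mathbf p,\mathbf q)\,2^{k_1+k_2}\;+\;G(k_1,k_2,\mathbf p,\mathbf q),
\end{equation*}
with leading coefficient $F(\mathbf p,\mathbf q):=4(p_1+q_1+2)(p_2+q_2+2)-9\,p_1q_1p_2q_2$ and correction $|G|\le C(\mathbf p,\mathbf q)\cdot 2^{\max(k_1,k_2)}$ for an explicit polynomial $C$. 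A divisibility-mod-$9$ argument shows $F\ne 0$ in every surviving configuration: $F=0$ forces $9\mid(p_1+q_1+2)(p_2+q_2+2)$, and since $(p_i+q_i+2)$ is divisible by $3$ only when $\{p_i,q_i\}=\{1,3\}$ and by $9$ for no admissible $(p_i,q_i)$ at all, the only candidate has both nodes of the form $\{1,3\}$, where direct computation yields $F=144-81=63$ (and in Case~A the same argument applied to $(p_2+q_2+2)\in\{8,10\}$ disposes of the three sub-cases at once).

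Once $|F|\ge 1$ uniformly, whenever $2^{\min(k_1,k_2)}>144\,C/|F|$ the leading term dominates the correction and the equation cannot hold. For the remaining range of small $\min(k_1,k_2)$, fix $\min(k_1,k_2)=k$ and view the equation as a function of $m=\max(k_1,k_2)$; the closed form of $J_n$ reduces it to $A(\mathbf p,\mathbf q,k)\,2^m+B(\mathbf p,\mathbf q,k)\,(-1)^m=0$ with $A\ne 0$, requiring $2^m=|B/A|$. Since $|B/A|\to 1$ as the primes grow, only finitely many $(\mathbf p,\mathbf q)$ satisfy $|B/A|\ge 2^k$, leaving a bounded residual set of $(\mathbf p,\mathbf q,k_1,k_2)$ to check directly.

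\emph{Main obstacle.} Making the residual block genuinely small. Extending the bounding technique of Proposition~\ref{twoare1ifsep} around the whole cycle --- propagating $b_i<2M/q_i$, $e_i\le(a_i+b_i)/2$ (Corollary~\ref{runbounds}), and the sharper limit $e_i\to(a_i+2b_i)/3$ from Theorem~\ref{runterms} --- should produce clean \emph{a priori} finite lists; in Case~B1 a direct execution yields $(q_1-2)(q_2-2)\le 12$, leaving only eleven pairs. The hard sub-cases are B2 and B4, where the two $1$-divisors occupy opposite-position slots of different nodes, the bounding chain winds twice through the cycle, and the contraction ratio depends on the two primes individually rather than as a product; closing the argument looks to require the exact expression $e_i=a_i+J_{k_i-1}(b_i-a_i)/2^{k_i-2}$ of Theorem~\ref{runterms} together with a case split on the sign of $d_i=(b_i-a_i)/2^{k_i-2}$ and the closing identity $M=2a_j+J_{k_j}d_j$ at the node carrying the largest term. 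If an infinite divisor list persists in some sub-case, the asymptotic/dual analysis of the previous paragraph still trims it to a finite verification.
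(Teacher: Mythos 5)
The statement you are proving is stated in the paper as Conjecture~\ref{tworuns}, and the paper deliberately leaves it open: it proves only the much weaker fact that there are no two-run cycles of length $11$ or less, and it explains, immediately after the conjecture, the obstruction that your proposal does not overcome. The linear system --- whether in the form (\ref{eq:system}) or (\ref{eq:nrun}) --- encodes only \emph{divisibility}, not division by the \emph{smallest} prime factor nor the requirement that an undivided sum actually be prime. The paper's example is the two-run signature $7,1,2,1,5,2,2$, which satisfies every linear constraint and yields the candidate $13,51,32,83,23,53,38$; it fails to be a cycle only because $51=3\times17$ is composite. So even a complete determination of all solutions $(p_1,q_1,p_2,q_2,k_1,k_2)$ of (\ref{eq:nrun}) would still leave a list of candidates, each of which must be refuted by a primality or least-factor check, and your argument never produces an explicit finite list on which such a check could be carried out.

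The finiteness reduction itself also has gaps. Your mod-$9$ justification of $F\neq0$ rests on the false claim that $p_i+q_i+2$ is divisible by $3$ only when $\{p_i,q_i\}=\{1,3\}$: for example $\{5,11\}$ gives $18$, which is divisible by $9$. (The conclusion $F\neq0$ does hold --- when the two $1$s lie in different nodes it reduces to $(5r-12)(5s-12)=324$ having no solution in odd primes, and Proposition~\ref{twoare1ifsep} handles the remaining case --- but not for the reason you give.) The dominance of the leading term only yields $2^{\min(k_1,k_2)}\le 144\,C(\mathbf p,\mathbf q)/|F|$ with $C$ a polynomial in the \emph{unbounded} primes, so $\min(k_1,k_2)$ is not absolutely bounded; the subsequent reduction to $A\,2^{m}+B\,(-1)^{m}=0$ requires ruling out $A=0$ and showing $|B/A|$ stays below $2^{k}$ outside an explicit finite set, which you assert via ``$|B/A|\to1$'' but do not verify (the limit depends on the direction in which the two primes grow). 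Finally, you concede that the sub-cases B2 and B4 are not closed and that an infinite divisor list may persist. As it stands this is a programme --- essentially the one the paper itself sketches in the paragraph following the conjecture --- rather than a proof.
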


Because of these results, all that is needed to prove this conjecture is a similar argument against the cases where one of $p_1,q_1$ and one of $p_2,q_2$ are 1, and eliminating the three exceptions of Proposition~\ref{twoare1ifsep}. However, consider the cycle signature $7,1,2,1,5,2,2$. Using these values for $s_1,\dotsc,s_n$ in the earlier system and scaling as in Theorem~\ref{signaturetocycle} gives the cycle candidate $13,51,32,83,23,53,38$, which would work if 51 were prime. Thus to prove that other `cycles' like this similarly fail, the primality test for an unknown set of numbers may be required.

However, it also seems possible that with a requirement of exactly two runs, primes in a signature and terms in a cycle are bounded in some way. After all, longer instances of such a cycle only means that runs take longer to terminate, but since runs are recurrences of averages, the cycle's two nodes' positions relative to each other should be fairly restricted.

\section{Cycles of a given length}
\label{sec:givenlen}

Regardless of whether the preceding argument can be formalized and generalized to cycles of any number of runs, it is still important that the cases involving cycles of shorter lengths are exhausted. How can we do this? Consider what we know:
\begin{itemize}[noitemsep,nolistsep]
\item Relationships between signature terms and between divisors (Eqs.\ \ref{eq:system} and \ref{eq:nrun})
\item Each signature corresponds to a unique potential cycle (Theorem~\ref{signaturetocycle})
\item Non-existence of 1-run cycles (Theorem~\ref{onerun})
\item Restrictions on possible signatures (Propositions~\ref{twoarenot1}~to~\ref{twoare1ifsep})
\end{itemize}
These give a way to determine whether 2-run cycles of a given length exist, which for cycle lengths of 6 to 8 exhaust all possible cycles of that length:

\begin{theorem}\label{6cycle}
There are no 6-cycles.
\end{theorem}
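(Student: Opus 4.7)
By the corollary following Theorem~\ref{onerun}, a 6-cycle must have shape $OOEOOE$, so it consists of exactly two runs with $k_1=k_2=3$. My plan is to show that no admissible choice of the four divisors $(p_1,q_1,p_2,q_2)$ yields a genuine subprime Fibonacci cycle. Specializing equation~(\ref{eq:nrun}) to $n=2$, $k_1=k_2=3$ (using $J_1=J_2=1$ and $J_3=3$) produces the single polynomial constraint
\[
4 p_1 q_1 p_2 q_2 \;=\; p_1 p_2 + 3(p_1 q_2 + q_1 p_2) + q_1 q_2 + 3(p_1 + q_1 + p_2 + q_2) + 8,
\]
which every such cycle's divisors must satisfy.

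Propositions~\ref{twoarenot1} and~\ref{twoare1} together force exactly two of the four divisors to equal $1$, so the analysis splits into $\binom{4}{2}=6$ subcases according to which pair is $(1,1)$. In the two subcases where both $1$s lie on the same node, Proposition~\ref{twoare1ifsep} restricts the other node's divisors to $\{(3,3),(3,5),(5,3)\}$; I expect direct substitution into the polynomial constraint to rule out all three triples immediately. In the remaining four subcases, substituting the two $1$s reduces the constraint to a factorization of a small integer, such as $(q_1-2)(q_2-2)=9$ when $p_1=p_2=1$ or $(q_1-4)(p_2-4)=33$ when $p_1=q_2=1$. Requiring each unknown to be either $1$ or an odd prime then leaves only a short explicit list of candidate signatures, essentially $(3,11),(5,5),(11,3)$ in the first family and $(5,37),(37,5)$ in the second.

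For each surviving candidate I would solve the homogeneous linear system~(\ref{eq:system}) to recover the unique (up to scaling) rational solution, rescale to the smallest coprime positive integer representative as in the proof of Theorem~\ref{signaturetocycle}, and then check the subprime Fibonacci rule on each term. I expect every candidate to fail for a concrete reason: the prescribed divisor either exceeds the smallest prime factor of the relevant sum (for example when the sum is itself that prime, so no division actually occurs and the predicted next term cannot arise), or it is overruled by a genuinely smaller prime factor (e.g., $33=3\cdot 11$ forces the divisor to be $3$, not $11$), or consecutive terms fail to be coprime and the `cycle' collapses to something shorter.

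The main obstacle is not conceptual: it is simply the bookkeeping of six subcases, each requiring a small linear-algebra computation and a careful comparison of the hypothesized signature value against the actual smallest prime factor of the two preceding terms' sum. All of the machinery needed is already in place; the task is to execute this finite case analysis cleanly.
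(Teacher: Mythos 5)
Your plan is essentially the paper's own proof: the same shape argument forcing $(k_1,k_2)=(3,3)$, the same divisor constraint $4p_1q_1p_2q_2 = p_1p_2+q_1q_2+3(p_1q_2+q_1p_2+p_1+q_1+p_2+q_2)+8$, the same reduction to exactly two unit divisors with the same case split and factorizations, and the same final step of solving the linear system for each surviving signature and checking that the predicted divisor disagrees with the actual smallest prime factor (the paper records this as, e.g., ``$4,7$ should be followed by $11$, not $1$''). The only cosmetic difference is that for the same-node case you apply Proposition~\ref{twoare1ifsep} before the polynomial constraint rather than after, which changes nothing.
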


\begin{proof}
A 6-cycle must have shape $OOEOOE$ and therefore a signature $p_1, q_1, 2, p_2, q_2, 2$. Using either Eq.\ \ref{eq:2rundet} with run configuration $(k_1, k_2) = (3,3)$ or the more general Eq.\ \ref{eq:system} for $n=6$, we get
\begin{equation*}
4 p_1 q_1 p_2 q_2 = p_1 p_2 + q_1 q_2 + 3( p_1 q_2 + q_1 p_2 + p_1 + q_1 + p_2 + q_2 ) + 8.
\end{equation*}
By our previous results, exactly two of $p_1,q_1,p_2,q_2$ must be 1. There are only four cases:

\textbf{Case 1.} $p_1,q_1=1$ (equivalent to $p_2,q_2 = 1$). Then $4 p_2 q_2 = 7 (p_2 + q_2 + 2)$. Since we only want solutions over the odd primes, then exactly one of $p_2$ and $q_2$ is 7 and $(p_2,q_2) = (3,7), (7,3)$. These solutions fail by Proposition~\ref{twoare1ifsep}.

\textbf{Case 2.} $p_1,p_2=1$. Then $q_1 q_2 = 2 q_1 + 2 q_2 + 5 \Rightarrow (q_1 - 2)(q_2 - 2) = 9$. The solutions over the odd primes are $(q_1,q_2) = (3,11),(5,5),(11,3)$, though reordering the runs shows $(3,11)$ and $(11,3)$ are equivalent.

\textbf{Case 3.} $q_1,q_2=1$. Then $p_1 p_2 = 2 p_1 + 2 p_2 + 5 \Rightarrow (p_1,p_2) = (3,11),(5,5),(11,3)$, though reordering the runs shows $(3,11)$ and $(11,3)$ are equivalent.

\textbf{Case 4.} $p_1,q_2=1$ (equivalent to $p_2,q_1 = 1$). Then $q_1 p_2 = 4 q_1 + 4 p_2 + 17 \Rightarrow (q_1 - 4)(p_2 - 4) = 33$. The solutions over the odd primes are $(q_1,p_2) = (5,37),(37,5)$.

Substitute the signature values into Eq.\ \ref{eq:system} and solve the system. Since the solution space is 1-dimensional, we can express all the cycle terms $t_1,\dotsc,t_n$ in terms of $t_1$ (even better: let $t_1=1$), and then multiply by the common denominator to get the unique cycle candidate:
\begin{table}[H]
\small
$
\begin{array}{ | c | l | l | l | l | l | l | l |}
\hline
(p_1, q_1, p_2, q_2) & t_2/t_1 & t_3/t_1 & t_4/t_1 & t_5/t_1 & t_6/t_1 & \text{Cycle candidate} & \text{It should be\ldots} \\ \hline
(1,3,1,11) & 3/5 & 4/5 & 7/5 & 1/5 & 4/5 & 5,3,4,7,1,4 & 4,7,11 \text{, not } 4,7,1 \\ \hline
(1,5,1,5) & 1/3 & 2/3 & 1 & 1/3 & 2/3 & 3,1,2,3,1,2 & 2,3,5 \text{, not } 2,3,1 \\ \hline
(3,1,11,1) & 19/9 & 14/9 & 1/3 & 17/9 & 10/9 & 9,19,14,3,17,10 & 19,14,11 \text{, not } 19,14,3 \\ \hline
(5,1,5,1) & 3 & 2 & 1 & 3 & 2 & 1,3,2,1,3,2 & 3,2,5 \text{, not } 3,2,1 \\ \hline
(1,5,37,1) & 11/41 & 26/41 & 1/41 & 27/41 & 14/41 & 41,11,26,1,27,14 & 11,26,37 \text{, not } 11,26,1 \\ \hline
(1,37,5,1) & 1/27 & 14/27 & 1/9 & 17/27 & 10/27 & 27,1,14,3,17,10 & 10,27,37 \text{, not } 10,27,1 \\ \hline
\end{array}
$
\normalsize
\caption{Candidates for a 6-cycle}
\end{table}
The last entry also fails because $1,14$ should be followed by 5, not 3, and because the largest term is not prime. Since all candidates fail, the theorem is proved. \end{proof}

The problem is that the linear system takes divisibility into account, but not divisibility by the \emph{smallest} prime factor, or no division if a sum is already prime. Note that the symmetries above do not always occur; here they arise from both the runs being of shape $OOE$.

There are also two lemmas that can simplify things:

\begin{lemma}\label{smallestterm}
The smallest term in a non-trivial cycle must be a node term (and thus odd), and at least 7.
\end{lemma}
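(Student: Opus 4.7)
The lemma has two assertions, which I would prove in sequence.

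\emph{Part 1: the smallest term is an odd node term.} Apply Corollary~\ref{runbounds}: within every run all terms lie between the two node terms, so the minimum of each run is attained at the smaller node term. Taking minima across the runs of the cycle, the smallest cycle term is itself a node term, which is odd by definition.

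\emph{Part 2: the smallest term $s$ is at least $7$.} Write the recurrence step producing $s$ as $s_i\cdot s = t_{i-2}+t_{i-1}$, with both summands $\ge s$ by minimality. Then $s_i\ge 2$; the case $s_i=2$ would force $t_{i-2}=t_{i-1}=s$, contradicting the coprimality of consecutive cycle terms (Proposition~\ref{coprime}). Hence $s_i$ is an odd prime, and because $s_i$ must also be the smallest prime factor of the sum $s_i\cdot s$, we have $s_i\le \mathrm{spf}(s)$. This rules out $s=1$ at once (no valid prime factor) and forces $s$ odd with $\mathrm{spf}(s)\ge 3$, leaving $s\in\{3,5,7,9,\ldots\}$.

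To exclude $s=3$ and $s=5$, I would enumerate for each the admissible predecessor pairs $(t_{i-2},t_{i-1})$---one even, one odd, both $\ge s$, pairwise coprime, with the one adjacent to $s$ coprime to $s$---subject to $t_{i-2}+t_{i-1}=s_i s$ for each allowed $s_i$. Each admissible pair uniquely determines the node containing $s$: if $t_{i-1}$ is even then $s$ is the first node term of $(s,b)$ with $b$ computed from $t_{i-1}+s$ by the signature rule, and if $t_{i-1}$ is odd then $s$ is the second node term of $(t_{i-1},s)$. For $s=3$ we have $s_i=3$ forced, sum $=9$, and only the two pairs $(5,4)$ and $(4,5)$, giving the two candidate nodes $(3,7)$ (with $b=(4+3)/1=7$, since $7$ is prime) and $(5,3)$. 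For $s=5$, $s_i\in\{3,5\}$ with sums $15$ or $25$ yields a longer but still finite list of about a dozen candidate nodes of the forms $(5,b)$ or $(a,5)$. For each candidate, I would apply the deterministic forward recurrence step by step for a bounded number of steps and verify that the trajectory joins a node on one of the already-identified cycles of Figure~\ref{fig:cycles} (for instance, $(3,7)$ leads via $(11,17),(31,15),\ldots$ into the 18-cycle and $(5,3)$ leads via $(7,11),(19,29),\ldots$ into the 136-cycle). Since none of these cycles contains $3$ or $5$, each candidate node is a transient state rather than a node of a cycle, contradicting the assumption.

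\textbf{Main obstacle.} The conceptual steps are all elementary; the real work is the bookkeeping in the $s=5$, $s_i=5$ subcase, where the sum $25$ admits roughly a dozen coprime even/odd decompositions, each producing a candidate node whose forward trajectory must be traced. Organizing these by whether $5$ is the first or second node term and by the value of the preceding even term keeps the verification finite and presentable.
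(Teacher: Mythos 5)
Your proposal is correct and follows essentially the same route as the paper: minimality via Corollary~\ref{runbounds} forces the smallest term to be an odd node term, and the values $1$, $3$, $5$ are then excluded by the same finite enumeration of predecessor pairs (sums $9$ for $3$; sums $15$ or $25$ for $5$) followed by forward computation showing each candidate is merely tributary to a known cycle. Your observation that the signature value must lie between $3$ and $\mathrm{spf}(s)$ is a slightly tidier way to cut down the possible sums than the paper's, but the substance is identical.
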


\begin{proof}
Since node terms bound a run's terms, the smallest number of the cycle must also be one of its node terms, which are odd by definition.

Dividing a composite number by its smallest prime factor never produces 1. If 3 is the smallest cycle term, the previous members $a,b$ must add to 3, 6 or 9. Since the same integer cannot be separated by only one term (the sequence $a,b,a$ continues into trivial cycle $a,a,\dotsc$), it follows that the smallest number in the cycle is less than the two preceding members. Hence, $a$ and $b$  are greater than 3, giving the two cases $(a,b) = (5,4), (4,5)$ which are tributary to, but not part of, non-trivial cycles.

If 5 occurs in a sequence, the previous members $a,b$ must add to 5, 10, 15, or 25, and if they are to be greater
than 5, $(a,b) = (6,9), (7,8), (8,7), (9,6), (6,19), \dotsc (19,6)$ (this list would be deduced by the first half of the `direct predecessor' method of Section \ref{sec:nodes}). Calculation shows these are tributary to, but not part of, non-trivial cycles.

Finally, 7 is the smallest member of the 136-cycle, completing the proof.
\end{proof}

Note that this result immediately disqualifies all the 6-cycle candidates. Also, using lower bound arguments omitted here, we can eliminate the three exceptions of Proposition~\ref{twoare1ifsep} for all 2-run cycles:

\begin{lemma}
In a 2-run cycle, $p_1,q_1$ cannot both be 1 and $p_2,q_2$ cannot both be 1.
\end{lemma}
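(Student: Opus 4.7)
The plan is to reduce to three residual cases via the preceding propositions, then derive a contradiction from the run-centric linear system.

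By Propositions~\ref{twoarenot1} and \ref{twoare1}, a two-run cycle has exactly two of $p_1, q_1, p_2, q_2$ equal to $1$. Suppose for contradiction that $p_1 = q_1 = 1$; then $p_2, q_2 \ge 3$, and Proposition~\ref{twoare1ifsep} forces $(p_2, q_2) \in \{(3,3),\,(3,5),\,(5,3)\}$. The symmetric claim about $p_2 = q_2 = 1$ follows by relabeling, so it suffices to rule out these three cases.

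For each case, I would work with the four linear equations underlying (\ref{eq:2rundet}) in the unknowns $a_1, d_1, a_2, d_2$. With $p_1 = q_1 = 1$, rows 1 and 3 let me solve for $a_1$ and $a_2$ as explicit linear combinations of $d_1$ and $d_2$ with Jacobsthal-number coefficients. Substituting into rows 2 and 4 gives a homogeneous $2 \times 2$ system in $(d_1, d_2)$, and a nontrivial cycle requires its determinant to vanish.

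Using $J_n = (2^n - (-1)^n)/3$ together with the abbreviations $U = 2^{k_1-2}$, $V = 2^{k_2-2}$, $\epsilon_i = (-1)^{k_i}$ (so $U, V \ge 2$ and $\epsilon_i = \pm 1$), the vanishing condition reduces in the three cases to
\begin{align*}
(3,3):&\quad 47\,UV + U\epsilon_2 + V\epsilon_1 = 4\,\epsilon_1\epsilon_2,\\
(3,5):&\quad 25\,UV + 5U\epsilon_2 - V\epsilon_1 = 2\,\epsilon_1\epsilon_2,\\
(5,3):&\quad 25\,UV = U\epsilon_2 - 5V\epsilon_1 + 2\,\epsilon_1\epsilon_2.
\end{align*}
In each case the dominant term is at least $25\,UV \ge 100$ (in fact $47\,UV \ge 188$ in case $(3,3)$), while the other terms are linearly bounded in $U$ and $V$ and the right-hand side is $O(1)$. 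One-line estimates such as $47\,UV - U - V \ge 184$, or $25\,UV - 5U - V \ge 49V - 10 \ge 88$, or $25\,UV - U - 5V \ge 45V - 2 \ge 88$ force strict inequalities against the bounded right-hand sides, producing the desired contradictions.

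The main obstacle is the Jacobsthal-number bookkeeping in the reduction: each of the three cases requires expanding a $2 \times 2$ determinant with entries of the form $\alpha U \pm \epsilon_1$ and $\beta V \pm \epsilon_2$ (equivalently, substituting into (\ref{eq:nrun}) for $n = 2$) and carefully collecting coefficients of $UV$, $U\epsilon_2$, $V\epsilon_1$, and $\epsilon_1\epsilon_2$. The calculation is routine but error-prone; once the three simplified equations are in hand, the closing estimates are immediate.
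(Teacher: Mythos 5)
The paper never actually proves this lemma: it is stated as following from ``lower bound arguments omitted here,'' so there is no written proof to compare against. Judged on its own, your argument is correct and complete in outline. The reduction is right: Propositions~\ref{twoarenot1} and \ref{twoare1} give exactly two divisors equal to $1$, and Proposition~\ref{twoare1ifsep} (with $n=2$, $i=1$) leaves only $(p_2,q_2)\in\{(3,3),(3,5),(5,3)\}$ once $p_1=q_1=1$, the other half being symmetric. I checked your three reduced equations by substituting $p_1=q_1=1$ and $J_n=\tfrac{1}{3}\bigl(2^n-(-1)^n\bigr)$ into the paper's two-run identity (the displayed consequence of (\ref{eq:2rundet})): with $U=2^{k_1-2}$, $V=2^{k_2-2}$, $\epsilon_i=(-1)^{k_i}$ one does get $47UV+U\epsilon_2+V\epsilon_1=4\epsilon_1\epsilon_2$ for $(3,3)$, and your $(3,5)$ and $(5,3)$ equations likewise check out; since $U,V\ge 2$ (as $k_i\ge 3$), your closing estimates such as $47UV-U-V\ge 46UV\ge 184>4$ do force the determinant to be nonzero, so the homogeneous system has only the trivial solution and no such cycle exists. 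The route is genuinely different in flavor from what the paper hints at: the authors' ``lower bound arguments'' presumably mean iterated term-bounding chains in the style of Propositions~\ref{twoare1} and \ref{twoare1ifsep}, tracking fractions of the largest term around the cycle, whereas you exploit the run-centric determinant so that the whole lemma collapses to three explicit Diophantine identities in powers of $2$ that visibly cannot hold. Your version has the advantage of being a finite, checkable computation valid uniformly in $k_1,k_2$; its only cost is the Jacobsthal bookkeeping, which you describe rather than fully display, but the stated outputs of that computation are correct.
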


Thus for cycles of longer length, we can apply Theorem~\ref{6cycle}'s method of generating candidates and easily show why they fail. For 7-cycles, the only possible run configuration is $(3,4)$; for 8-cycles, the two possible configurations are $(3,5)$ and $(4,4)$. By considering all configuration cases and proceeding with the method programmatically, Andrew Bremner has shown \cite{Bre13} that:

\begin{theorem}
There are no 2-run cycles of length 30 or less.
\end{theorem}

One can also consider cycles of more than two runs, though they require significantly more casework. Consider $(3,3,4)$, the only valid 3-run configuration for 10-cycles. Bremner has provided the following form of argument:

With Eq.\ \ref{eq:system} for $n=10$, we get
\begin{align*}
16 p_1 q_1 p_2 q_2 p_3 q_3 =&\ 3 p_1 p_2 p_3 + 3 p_1 p_2 q_3 + 9 p_1 q_2 p_3 + 3 p_1 q_2 q_3 + 5 q_1 p_2 p_3 + 9 q_1 p_2 q_3\\
&+ 5 q_1 q_2 p_3 + 3 q_1 q_2 q_3 + 9 p_1 p_2 + 9 p_1 q_2 + 9 p_1 p_3 + 9 p_1 q_3 + 15 q_1 p_2\\
&+ 5 q_1 q_2 + 5 q_1 p_3 + 9 q_1 q_3 + 5 p_2 p_3 + 9 p_2 q_3 + 15 q_2 p_3 + 9 q_2 q_3 + 27 p_1\\
&+ 15 q_1 + 15 p_2 + 15 q_2 + 15 p_3 + 27 q_3 + 44.
\end{align*}

Observe that each summand can be bounded in terms of $p_1 q_1 p_2 q_2 p_3 q_3$ as long as an equivalent condition holds, e.g.,
\begin{equation*}
3 p_1 p_2 p_3 < (43/70) p_1 q_1 p_2 q_2 p_3 q_3 \iff q_1 q_2 q_3 > 210/43.
\end{equation*}
Construct 26 such conditions (one for each summand) so that if they all hold, then
\begin{equation*}
16 p_1 q_1 p_2 q_2 p_3 q_3 < 26(43/70)p_1 q_1 p_2 q_2 p_3 q_3 + 44,
\end{equation*}
which is equivalent to $p_1 q_1 p_2 q_2 p_3 q_3 < 1540$. Either this holds, or one of the 26 conditions is false; for example, $q_1 q_2 q_3 > 210/43$ might not hold. Equivalently, this means at least one of 27 upper-bounding conditions must be satisfied.

By significant casework (aided by eliminating cases with symmetry arguments, requiring the divisors to be odd prime or 1, and restricting the number of 1s), one gets an exhaustive list of candidates for $(p_1, q_1, p_2, q_2, p_3, q_3)$. Most are eliminated as before by noting that the corresponding cycle candidate does not follow sequence rules, e.g., the smallest prime divisor is not divided out. By this process one retrieves the 10-cycle discovered earlier as the only 10-cycle of configuration $(3,3,4)$. Bremner also showed that there are no 9-cycles of configuration $(3,3,3)$ in a similar manner \cite{Bre13}.

Since no cycles of length 8 or less have 3 runs, and because the only 3-run configurations for 9-cycles and 10-cycles are equivalent to $(3,3,3)$ and $(3,3,4)$, respectively, we can definitively state that:

\begin{theorem}
There are no (non-trivial) cycles of length 9 or less. There is only one cycle of length 10, generated by $(127, 509)$.
\end{theorem}

\section{Conclusion}

This paper has explored relatively cursory properties of the subprime Fibonacci sequences: most of our deductions have relied only on elementary number theory, algebra, empirical observations, and diagrams. Of course, this is the way we prefer it; to write the first exposition and let others prove the hard results!

It all returns to the low barrier to playing with these sequences. Surely, similar manipulations will yield new results, but we expect that significantly deeper mathematics will be needed to answer the difficult question of the (non-)existence of divergent sequences and the finitude of cycles, perhaps the kind of mathematics necessary to solve the notorious $3x+1$ problem.

However, there are plenty of questions that seem both computationally and mathematically tractable. Here are some of the more obvious ones:

\begin{itemize}
\item We have shown that there are no 2-run cycles of length 30. How far can this be extended computationally? There are no 3-run cycles of configuration $(3,3,3)$ and only one of $(3,3,4)$. Can you also extend the 3-run procedure and show that the 10-cycle and the 11-cycle are the only 3-run cycles less than a certain length? Procedures to exhaust 4-run or greater cases would also be welcome.
\item Are there \emph{any} other non-trivial cycles? We have found six non-trivial cycles using starting values $a,b$ within the range $1 \le a,b \le 10^{6}$. This could be attacked by increasing the search range or considering more classes (run configurations or otherwise) of cycles.
\item We did not explore if/how divisors and terms are bounded based on the number of runs. Maybe one can prove cycle results in this manner. Similarly, don't immediately accept our abstractions of runs, nodes, and signatures if other approaches are fruitful!
\end{itemize}

We leave the reader with a recent article by Conway on unsettleable arithmetical problems, featuring the $3x+1$ problem and `Collatzian games' \cite{Con13}.  It's a casual warning to not be too occupied with answering the big questions. Regardless, have fun and let us know what you discover.

\bibliographystyle{amsplain}
\bibliography{subprime}

\end{document}